\title{Vector bundles of finite rank on complete intersections
of finite codimension in ind-Grassmannians}
\author{Svetlana Ermakova}
\newtheorem{theorem}{Theorem}
\newtheorem{lemma}{Lemma}
\newtheorem{proposition}{Рroposition}
\newtheorem{zam}{Remark}
\newtheorem{definition}{Definition}
\newtheorem{corollary}{Corollary}
\begin{document}
\maketitle

\begin{abstract}
In this article we establish an analogue of 
the Barth–Van de Ven– Tyurin–Sato theorem.
We prove that a finite rank vector bundle on a complete intersection   of finite codimension in a linear ind-Grassmannian   is isomorphic to a direct
sum of line bundles.

\end{abstract}

\section{Introduction}

The Barth–Van de Ven–Tyurin–Sato theorem claims that any finite rank vector bundle on
the infinite complex projective space $\mathbf{ P^{\infty}}$ is isomorphic to a direct sum of line bundles.
For rank two bundles this was established by Barth and Van de Ven in
\cite{BV}, and for finite rank bundle
it was proved by Tyurin in \cite{T} and Sato in \cite{S1}.
In particular, the Barth–Van de Ven–Tyurin–Sato theorem
holds for linear ind-Grassmannians and their linear sections 
 $\cite{DP, PT1, PT2, PT3}$.

In this work we will extend these results on the case of a complete intersections in  linear ind-Grassmannians. The ground field in this work is $\mathbb{C}$.

First, we recall the definition 
of linear ind-varieties in general
and linear ind-Grassmanninans studied by 
I. Penkov and A. Tikhomirov 
in \cite{PT2,PT3,PT4} in particular.

\begin{definition}An  \textbf{ind-variety}  $\mathbf{X}=\underrightarrow{\lim}X_m$ 
is the direct limit of a chain of embeddings:
 \begin{equation}\label{X=} {\bf X}:=\{ X_1 \stackrel{\phi_1}{\hookrightarrow} X_2\stackrel{\phi_2}{\hookrightarrow} \ldots  \stackrel{\phi_{m-1}}{\hookrightarrow} X_m \stackrel{\phi_m}{\hookrightarrow} \ldots\},
\end{equation}
where $X_m$ is a smooth algebraic variety for any
 $m\geq1$.
\end{definition}

\begin{definition}
A \textbf{vector bundle} $\mathbf{E}$ of rank $r >0$ on $\mathbf{X}$ is the inverse limit
\begin{equation}\label{E=} \mathbf{E}=\underleftarrow{\lim}E_m
\end{equation}
of an inverse system
of vector bundles  $\{E_m\}_{m\geq 1}$ of rank $r$ on $\mathbf{X}$ $($i.e., a system of vector bundles
$E_m$ with fixed isomorphisms  $E_m\cong \phi_m^*E_{m+1}$$)$.
\end{definition}

In particular, the structure sheaf $\mathcal{O}_{\mathbf{X}}=\underleftarrow{\lim}\mathcal{O}_{X_m}$
of an ind-variety $\mathbf{X}$ is well defined.
By the Picard group $Pic\mathbf{X}$ we understand
the group of isomorphism classes of line bundles on $\mathbf{X}$.

\begin{definition}\label{linejnoe ind-mnogoobrazie} An ind-variety  $\mathbf{X}$  is called \textbf{linear},
if a line bundle $\mathcal{O}_{\mathbf{X}}(1)=\underleftarrow{\lim}\mathcal{O}_{X_m}(1)$ is defined on it, where for each $m\geq1$  the line bundle $\mathcal{O}_{X_m}(1)$ is ample on $X_m$.
\end{definition}

\subsubsection{Linear ind-Grassmannians and complete intersections in them}

For integers $m\geq1$, $n_m$ and $k_m$ satisfying  $1 \leq k_m \leq n_m$ consider the vector space  $V^{n_m}$ of dimension $n_m$ and the Grassmannian $G(k_m,n_m)$ of $k_m$-dimensional vector
subspaces in $V^{n_m}$. Consider as well the Pl{\"u}cker embedding of 
 $G(k_m,n_m)$:
$G(k_m,n_m){\hookrightarrow} \mathbb{P}^{N_m-1}=\mathbb{P}(\Lambda^{N_m})$, where 
$N_m=\binom{n_m}{k_m}$.

We define the ind-Grassmannian
 ${\bf{G:}}={\bf{G(\infty)}}$
 as the direct limit
 $\underrightarrow{\lim} G(k_m,n_m)$ of a chain of embeddings:
\begin{equation}\label{G=}  G(k_1,n_1) \stackrel{\phi_1}{\hookrightarrow} G(k_2,n_2)\stackrel{\phi_2}{\hookrightarrow} \ldots  \stackrel{\phi_{m-1}}{\hookrightarrow} G(k_m,n_m) \stackrel{\phi_m}{\hookrightarrow} \ldots,
\end{equation}
with conditions
$$\lim_{m\to \infty} k_m=\lim_{m\to \infty}(n_m-k_m)=\infty.$$

Further on we assume that the ind-Grassmannian
 $\bf G$ is \emph{linear},
i.e., it has a line bundle
$\mathcal{O}_{\mathbf{G}}(1)=\underleftarrow{\lim}\mathcal{O}_{G(k_m,n_m)}(1)$, 
where the class of the line bundle
$\mathcal{O}_{G(k_m,n_m)}(1)$ generates  $Pic(G(k_m,n_m))$ for all $m\geq 1$.

Since $\bf G$ is linear the embeddings
 $$\{\ldots  \stackrel{\phi_{m-1}}{\hookrightarrow} G(k_m,n_m) \stackrel{\phi_m}{\hookrightarrow} G(k_{m+1},n_{m+1}) \stackrel{\phi_{m+1}}{\hookrightarrow} \ldots\}$$
 can be extended to linear embeddings of Pl{\"u}cker spaces 
 $$\{ \ldots  \stackrel{\phi_{m-1}}{\hookrightarrow} \mathbb{P}^{N_m-1} \stackrel{\phi_m}{\hookrightarrow} \mathbb{P}^{N_{m+1}-1} \stackrel{\phi_{m+1}}{\hookrightarrow} \ldots\}.$$
 
Next, for $l \geq 1$  and $d_1,d_2,...,d_l \geq 1$
consider  the linear ind-subvariety   ${\bf X}$ of the linear ind-Grassmannian ${\bf G}$ that is the direct limit $\mathbf{X}=\underrightarrow{\lim}X_m$
of the following chain of embeddings

\begin{equation}\label{X=}
{\bf X}:=\{ X_1 \stackrel{\phi_1}{\hookrightarrow} X_2\stackrel{\phi_2}{\hookrightarrow} \ldots  \stackrel{\phi_{m-1}}{\hookrightarrow} X_m \stackrel{\phi_m}{\hookrightarrow} \ldots\}.
\end{equation}
Here $X_m$  is the intersection of the Grassmannian  $G(k_m,n_m)\subset \mathbb{P}^{N_m-1}$ with $l$  hypersurfaces $Y_{1,m}, Y_{2,m},\ldots,Y_{l,m}\subset \mathbb{P}^{N_m-1}$
of fixed degrees $\deg Y_{i,m} = d_i$, $i=1,...,l$, $m\geq 1$:

\begin{equation} X_m=G(k_m,n_m)\cap \bigcap_{i=1}^{l} Y_{i,m}, \; \; \; \mathrm{codim}_{G(k_m,n_m)}X_m=l.
 \end{equation}

\begin{definition} The constructed ind-variety  $\mathbf X$ is called a \textbf{complete intersection} of codimension  $l$ in the linear ind-Grassmannian $\mathbf{G}$.
\end{definition}

In other words, the ind-variety  $\mathbf{X}$ 
is an intersection of the ind-Grassmannian
$\mathbf{G}$ with ind-hypersufaces  $\mathbf{Y_1}, \mathbf{Y_2},\ldots,\mathbf{Y_l}$ in the linear ind-space $\mathbf{P}^{\infty}=\underrightarrow{\lim} \mathbb{P}^{N_m-1}$:
\begin{equation} \mathbf{X}=\mathbf{G}\cap \bigcap_{i=1}^{l} \mathbf{Y_i}, \; \mathbf{Y_i}={\underrightarrow{\lim}Y_{i,m}},\; i=1,...,l, \; {m\geq1}.
\end{equation}

For  $i=1,...,l$ number $\deg \mathbf{Y_i}:=\deg Y_{i,m}=d_i$, ${m\geq1}$ is called the \textit{ degree} of the ind-hyperusrace $\mathbf{Y_i}$ в $\mathbf{P}^{\infty}$.

The main result of this work is the following theorem.

\begin{theorem} \label{MainTheorem}
Any vector bundle of finite 
rank on a complete intersection  ${\bf X \subset \bf G}$ of finite codimension is isomorphic to a direct
sum of line bundles.
\end{theorem}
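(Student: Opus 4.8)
The plan is to prove Theorem~\ref{MainTheorem} by induction on the rank $r$ of $\mathbf{E}=\underleftarrow{\lim}E_m$, the inductive step being to split off a line subbundle (or a quotient line bundle) with locally free complement of rank $r-1$; the rank one case, together with all the vanishing needed to turn such a filtration into an actual direct sum, will come from Lefschetz-type facts about the finite-dimensional complete intersections $X_m$.

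I would first record the homological input. Each $X_m$ is a smooth ample complete intersection of the fixed multidegree $(d_1,\dots,d_l)$ in the homogeneous variety $G(k_m,n_m)$, hence projectively normal and arithmetically Cohen--Macaulay; since $K_{X_m}=\mathcal{O}_{X_m}(\sum_i d_i-n_m)$ and $n_m\to\infty$, for $m\gg0$ the variety $X_m$ is Fano, and $\dim X_m\to\infty$. Therefore, for every fixed $t\in\mathbb{Z}$ and every fixed $i>0$ one has $H^i(X_m,\mathcal{O}_{X_m}(t))=0$ and $\mathrm{Pic}(X_m)=\mathbb{Z}\cdot\mathcal{O}_{X_m}(1)$ for $m$ large, and the restriction maps $\mathrm{Pic}(X_{m+1})\to\mathrm{Pic}(X_m)$ are isomorphisms. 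Passing to the limit gives $\mathrm{Pic}(\mathbf{X})=\mathbb{Z}\cdot\mathcal{O}_{\mathbf{X}}(1)$ --- this is the rank one case of the theorem --- and $H^i(\mathbf{X},\mathcal{O}_{\mathbf{X}}(t))=0$ for all $i>0$ and all $t\in\mathbb{Z}$. Consequently, once two ind-bundles are known to be direct sums of line bundles, every extension of one by the other splits, because the relevant $\mathrm{Ext}^1$ is a finite direct sum of groups $H^1(\mathbf{X},\mathcal{O}_{\mathbf{X}}(t))=0$; this is what will upgrade a filtration of $\mathbf{E}$ to a direct sum decomposition.

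Next I would introduce the generic splitting type of $\mathbf{E}$ along lines. For $m\gg0$ the variety $X_m$ is swept out by lines of $G(k_m,n_m)$: inside the homogeneous (hence connected) variety of lines of $G(k_m,n_m)$ the lines lying on all the $Y_{i,m}$ are cut out by $\sum_i(d_i+1)$ conditions, so they form a nonempty family that dominates $X_m$ with fibres of dimension $\to\infty$. Hence $E_m$ has a well-defined generic splitting type $\underline{a}=(a_1\ge\dots\ge a_r)$ along lines of $X_m$, and because a line of $X_m$ is a line of $X_{m+1}$ while $E_m\cong\phi_m^{*}E_{m+1}$, the tuple $\underline{a}$ is the same for all $m$. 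Twisting $\mathbf{E}$ by $\mathcal{O}_{\mathbf{X}}(-a_r)$ I may assume $a_r=0\le a_1$. The inductive step now divides into two cases. If $\underline{a}=(0,\dots,0)$, I would show $\mathbf{E}$ is trivial by producing $r$ independent global sections: on a general line $\ell$ the evaluation map identifies the sections of $\mathbf{E}$ with $H^0(\ell,\mathcal{O}_\ell^{\oplus r})$, and the task is to promote this to $\mathbf{X}$. If $\underline{a}\ne(0,\dots,0)$, the subbundle of $E_m|_\ell$ generated by the copies of $\mathcal{O}_\ell(a_1)$ on a general line is canonical, and the task is to show it is the restriction of an honest subbundle $\mathbf{E}_1\subset\mathbf{E}$, whose generic splitting type is the string of top entries of $\underline{a}$; combined with the $\mathrm{Ext}^1$-vanishing above and induction applied to $\mathbf{E}_1$ and $\mathbf{E}/\mathbf{E}_1$, this finishes the proof.

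In both cases the difficulty is the same: globalizing over all of $\mathbf{X}$ the canonical sub- (respectively quotient) object that lives on a general line, i.e.\ showing that $\mathbf{E}$ behaves like a uniform bundle. On a single $X_m$ this is false, since $E_m$ may have jumping lines, and the ind-structure must be used to remove them: through a general point of $\mathbf{X}$ one produces sub-ind-varieties $\mathbf{Z}\subset\mathbf{X}$ on which $\mathbf{E}|_{\mathbf{Z}}$ is already a direct sum of line bundles, by the Barth--Van de Ven--Tyurin--Sato theorem for linear ind-Grassmannians and their linear sections \cite{DP,PT1,PT2,PT3}, and then transports the resulting sections or subbundles back to $\mathbf{X}$, using the cohomology vanishing above to kill the obstructions to extension. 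When every $d_i=1$ the $\mathbf{Z}$ may be taken linear and this is classical; the genuinely new point --- and the step I expect to be the main obstacle --- is that when some $d_i\ge2$ the complete intersection $X_m$ need no longer contain large linear subspaces, so these sub-ind-varieties have to be produced by a dimension count showing that the incidence schemes of linear (or low-degree) subvarieties of $X_m$ through a fixed general point, disjoint from the jumping locus of $E_m$, have dimension tending to infinity uniformly in the twisting data, and then arranged to be nested as $m$ grows. Establishing this estimate for complete intersections, in place of the linear algebra that suffices when all $d_i=1$, is the heart of the argument.
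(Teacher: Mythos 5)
Your overall skeleton --- find a filtration of $\mathbf{E}$ whose graded pieces are twists of bundles trivial on lines, then use Picard/Lefschetz facts and vanishing of $H^1(\mathcal{O}_{X_m}(t))$ for $m\gg0$ to split the filtration --- agrees with the paper's third step, and your cohomological claims there are essentially the paper's Kodaira/Lefschetz argument. But the two core steps are not actually proved in your proposal; you yourself flag them as ``the task'' and ``the heart of the argument.'' Concretely: (a) you never establish that the canonical sub-object defined on a general line globalizes to an honest subbundle $\mathbf{E}_1\subset\mathbf{E}$ (equivalently, that $\mathbf{E}$ is uniform and that the fibrewise flag at a point is independent of the line through it and varies algebraically), and (b) you never prove that a bundle with everywhere-trivial splitting type on lines is trivial on $\mathbf{X}$; producing $r$ independent global sections from sections on one line is exactly the difficulty, not a routine promotion. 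Your proposed route to fill (a) --- restricting to sub-ind-varieties $\mathbf{Z}$ on which the classical Barth--Van de Ven--Tyurin--Sato theorem applies, avoiding jumping loci by a dimension count for linear or low-degree subvarieties of the complete intersections $X_m$, nested in $m$ --- is left entirely speculative, and it is not clear it can be carried out: for $d_i\ge 2$ the $X_m$ need not contain the large projective subspaces on which the cited results apply, which is precisely why the present paper does not argue this way.

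For comparison, the paper resolves (a) by quoting the uniformity theorem for finite rank bundles on $\mathbf{X}$ (Theorem \ref{uniform}, from \cite{ermakova2}) and then constructing the flag via the geometry of the base $B_m(x)$ of lines through a point: $B_m(x)$ is the intersection of $\mathbb{P}^{k_m-1}\times\mathbb{P}^{n_m-k_m-1}$ with a bounded number of low-degree hypersurfaces (Proposition \ref{baza}), hence for $m\gg0$ it is $1$-connected and every line in it lies in a large linear subspace, so any morphism from $B_m(x)$ to a low-dimensional variety is constant (Theorem \ref{TheoremBaseInPoint}); this forces the Grothendieck flag in $\mathbf{E}_x$ to be independent of the line and, via Lemma \ref{lemmaBaseInPoint} and grassmannisations, to assemble into subbundles $F_i\subset E_m$ (Theorem \ref{ChainOfSubbandels}). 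It resolves (b) by the path-variety machinery: the varieties $P_{k_m}(x,y)$ of length-$k_m$ paths in $X_m$ are non-empty and connected (Corollary \ref{svjaznost' dlja G(k,n)}, from \cite{ermakova}), so the evaluation morphism $f_{x,k_m}$ is dominant with connected fibres and a projection-formula argument (Theorem \ref{linear_trivialiti}) gives triviality. Without substitutes for these two inputs your induction on rank cannot get started, so as it stands the proposal has a genuine gap rather than an alternative proof.
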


\textbf{Acknowledgement.} I would like to thank my 
PhD advisor A.S. Tikhomirov and 
A. Kuznetsov and D. Panov as well.

\section{Preliminary notions and the idea of proof
}

To explain the idea of proof of Theorem \ref{MainTheorem} we will need to give some definitions
and recall the main results of articles
\cite{ermakova} and \cite{ermakova2}.
\begin{definition}  \label{proektivnoe podmnogooobrazie} Let $X$ be a projective 
variety with an ample line bundle $\mathcal{O}_X(1)$.
A \textbf{projective space} in  $X$ 
a subvariety $M\simeq\mathbb{P}^r$ in $X$ such that $\mathcal{O}_ {X}(1)|_M\cong\mathcal{O}_{\mathbb{P}^r}(1)$.
In the case $\dim(M)=1$ we call $M$ 
a  \textbf{projective line} or just a \textbf{line} in $X$.
\end{definition}

Using Definition  \ref{proektivnoe podmnogooobrazie}
we can give the definition of projective subspace in 
an ind-variety $\mathbf{X}$.

\begin{definition} \label{proektivnoe podmnogooobrazie v X}
A \textbf{projective space} in $\mathbf{X}$ 
is a variety $M\simeq\mathbb{P}^r$ in $\mathbf{X}$, such that $\mathcal{O}_ {\mathbf{X}}(1)|_M\cong\mathcal{O}_{\mathbb{P}^r}(1)$.
\end{definition}

\begin{definition} A
\textbf{path} $p_n(x,y)$ of length $n$ 
on an ind-variety
$\mathbf{X}$ connecting points  $x$ and $y$, is a
collection of points  $x=x_0,x_1,...,x_n=y$ in
$\mathbf{X}$ and a collection of projective lines  $l_0,...,l_{n-1}$ в $\mathbf{X}$ such that $x_i,x_{i+1}\in l_i$.  

The variety of all length $n$ paths connecting $x$ and $y$
is denoted by $P_n(x,y)$.
\end{definition}

\begin{definition} Linear ind-variety 
$\mathbf X$  is called \textbf{\emph{$1$-connected}}, 
if for any two points $x,y\in \mathbf X$ there exists a path
connecting $x$ with $y$.
\end{definition}

\begin{definition} 
We will say that a vector bundle $\mathbf{E}$ on $\mathbf{X}$ is \textbf{trivial on lines} if for any projective line $l$ on $\mathbf{X}$ the restriction $\mathbf{E}|_{l}$ is trivial.
\end{definition}

\begin{definition} Let $\mathbf{E}$ be a rank $r$ bundle on a linear variety $\mathbf{X}$. The 
\textbf{splitting type} of the bundle  $\mathbf{E}$ on a projective line $ l \subset \mathbf{X}$  is a collection of numbers $r_i >0$ and $a_i \in \mathbb{Z}$, $i=1,...,s$ such that
$$\mathbf{E}|_l\cong r_1\mathcal{O}_{\mathbb{P}^1}(a_1) \oplus r_2\mathcal{O}_{\mathbb{P}^1}(a_2) \oplus \ldots r_s\mathcal{O}_{\mathbb{P}^1}(a_s)$$
и $a_{1}>a_{2}>...>a_s$, $\sum_{i=1}^{s}r_i=r$.

A bundle  $\mathbf{E}$ is called
{\textbf{uniform}}, if 
its restriction to all projective lines 
has the same splitting type.
\end{definition}
 
We will need to use several  results of articles
\cite{ermakova} and \cite{ermakova2} on complete 
intersections $\mathbf X \subset \mathbf{G}$,
that we recall now for convenience.


\begin{theorem}[\cite{ermakova}] \label{main1} Let $X$  
be a complete intersection of $G(n,2n)$ embedded by Pl{\"u}cker with a collection of hypersurfaces of degrees $d_1,...,d_l:$
$X=G(n,2n)\cap  \bigcap_{i=1}^{l} Y_{i}$.
If $2\sum_i (d_i+1)\le [\frac{n}{2}],$ then the variety $P_n(u,v)$ of length $n$ paths
connecting any two points $u,v$ in $X$
is non-empty and connected.
\end{theorem}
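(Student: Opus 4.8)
The plan is to work entirely with the fixed projective variety $X=G(n,2n)\cap\bigcap_{i=1}^{l}Y_i$ and to reduce both non-emptiness and connectedness of $P_n(u,v)$ to two ingredients: a precise description of the variety of lines on $X$ through a point, and a tower (iterated fibration) description of the path space. First I would recall that the lines on $G(n,2n)$ through a fixed point $x=[W]$ are parametrized by flags $A\subset W\subset B$ with $\dim A=n-1$, $\dim B=n+1$; hence the variety $L_x(G)$ of such lines is $\mathbb{P}(W^{*})\times\mathbb{P}(V^{2n}/W)\cong\mathbb{P}^{n-1}\times\mathbb{P}^{n-1}$, of dimension $2(n-1)$, and a point of $L_x(G)$ together with a point of the corresponding $\mathbb{P}^1$ recovers the neighbour $x'$ of $x$.

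Next comes the key local statement: for every $x\in X$ the variety $L_x(X)$ of lines on $X$ through $x$ is non-empty and connected. Restricting the defining form of $Y_i$ to a line through $x$ gives a binary form of degree $d_i$ that already vanishes at the point $x$, so requiring the whole line to lie on $Y_i$ imposes at most $d_i$ further conditions; thus $L_x(X)$ is cut out in $L_x(G)$ by at most $\sum_i d_i$ hypersurface conditions, ample via the Segre embedding of the product. Since the hypothesis $2\sum_i(d_i+1)\le[\tfrac n2]$ forces $\sum_i d_i$ to be far smaller than $\dim L_x(G)=2(n-1)$, the Lefschetz-type connectedness theorem (an irreducible projective variety of dimension $\ge 2$ meets an ample divisor in a connected set) applies repeatedly, yielding that $L_x(X)$ is non-empty, connected, and of dimension at least $2(n-1)-\sum_i d_i$, uniformly in $x$.

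I would then describe the path space by the tower of forgetful maps $Q_k\to Q_{k-1}$, where $Q_k$ is the variety of partial paths $(x_0,l_0,\dots,x_{k-1},l_{k-1},x_k)$ on $X$; the fibre of $Q_k\to Q_{k-1}$ over a partial path ending at $x_{k-1}$ is the universal $\mathbb{P}^1$ over $L_{x_{k-1}}(X)$, hence non-empty and connected by the previous step. Because $X$ is a complete intersection of small codimension $l\le[\tfrac n2]$ in $G(n,2n)$ it is connected, and a proper surjective morphism with connected base and connected fibres has connected total space; inducting along the tower shows that $Q_n$ is non-empty and connected and that the initial-point map $Q_n\to X$ is surjective. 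The full path space $P_n$ is projective, so the endpoint evaluation $\mathrm{ev}\colon P_n\to X\times X$, $(x_0,\dots,x_n)\mapsto(x_0,x_n)$, is proper; then $P_n(u,v)=\mathrm{ev}^{-1}(u,v)$, and since a dominant proper map to an irreducible variety is surjective, non-emptiness of every fibre reduces to dominance of $\mathrm{ev}$.

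Dominance I would establish by a dimension count together with a reachable-set argument: the estimate $\dim L_x(X)\ge 2(n-1)-\sum_i d_i$ shows that the set of points reached from a fixed $u$ by one line is positive-dimensional, and iterating, the set $C_k(u)$ of points reachable in $\le k$ steps is connected and strictly grows in dimension until, within $n$ steps, it is dense in $X$; here the bound $2\sum_i(d_i+1)\le[\tfrac n2]$ is exactly what guarantees that $n$ steps suffice and that every inequality has slack. Connectedness of the individual fibre $P_n(u,v)$ (not merely of $P_n$) I would then read off from the Stein factorization of the proper map $\mathrm{ev}$: the tower gives a connected generic fibre, and the slack in the dimension counts forces the finite part of the Stein factorization to be bijective onto its image, so all fibres are connected. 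The main obstacle is precisely this uniformity over \emph{all}, including the most special, pairs $(u,v)$ — those with $\dim(W_u\cap W_v)$ large, where the line varieties $L_x(X)$ may drop dimension and the fibres of $\mathrm{ev}$ degenerate; the role of the strong numerical hypothesis is to supply enough codimension slack that the Lefschetz connectedness estimates and the fibre-dimension counts remain valid at every point rather than only generically.
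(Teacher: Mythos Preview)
The present paper does not prove this theorem; it is quoted from the earlier paper \cite{ermakova} and used as a black box (the only related argument given here is the short reduction, immediately afterwards, of the $G(k,n)$ case to this $G(n,2n)$ case by passing to the sub-Grassmannian $G(k,U\oplus V)$). So there is no proof in this text to compare your proposal against; what follows is an assessment of your sketch on its own terms.

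Your structural ingredients are correct and in line with what the paper uses elsewhere: the identification $L_x(G(n,2n))\cong\mathbb{P}^{n-1}\times\mathbb{P}^{n-1}$, the fact that $L_x(X)$ is cut out from it by $\sum_i d_i$ hypersurfaces of bounded degree (this is exactly Proposition~\ref{baza} here), and the tower-of-fibrations picture of the path variety. The genuine gap is the final step. Stein factorization of $\mathrm{ev}\colon P_n\to X\times X$ (or of the endpoint map $p_1^{-1}(u)\to X$) produces a finite map $Z\to X\times X$; to conclude that \emph{every} fibre $P_n(u,v)$ is connected you need this finite map to be an isomorphism, which, with the target normal, amounts to knowing that the \emph{generic} fibre is already connected and that a single irreducible component of the source dominates the target. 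Your tower argument yields only connectedness of the total space $p_1^{-1}(u)$, and that is strictly weaker: a connected source can still sit over a finite degree-two $Z\to X\times X$, giving disconnected fibres everywhere. You correctly flag this as ``the main obstacle'' but the sentence ``the slack in the dimension counts forces the finite part of the Stein factorization to be bijective'' is a hope, not an argument. Similarly, the reachability claim that $C_k(u)$ strictly grows in dimension until it fills $X$ is not justified: nothing you have written excludes that all lines through points of $C_{k-1}(u)$ remain in $C_{k-1}(u)$, and positive fibre dimension alone does not force growth of the image. These two points are where the actual content of the theorem lives; your proposal has located them but not closed them.
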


We will also need a corollary of this theorem for the 
case of complete intersection in $G(k,n)$.
We will assume $k\leq [\frac{n}{2}]$. 

\begin{corollary} \label{svjaznost' dlja G(k,n)} 
Let $X$ 
be a complete intersection of $G(k,n)$ embedded by Pl{\"u}cker 
with a collection of hypersurfaces of degrees $d_1,...,d_l:$ $X=G(k,n)\cap  \bigcap_{i=1}^{l} Y_{i}$.
If $2\sum_i (d_i+1)\le \frac{k}{2} \le [\frac{n}{4}]$ then the variety $P_k(u,v)$ of length $k$ paths
connecting any two points $u,v$ in $X$
is non-empty and connected.
\end{corollary}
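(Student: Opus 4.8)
The plan is to deduce the statement about complete intersections in $G(k,n)$ from Theorem~\ref{main1}, which treats the case $G(n,2n)$, by embedding $G(k,n)$ into an auxiliary Grassmannian of the special form $G(N,2N)$ in a way compatible with the Pl\"ucker polarizations and then applying Theorem~\ref{main1} there.

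First I would recall the standard fact that $G(k,n)$ embeds as a linearly embedded (i.e. Pl\"ucker-compatible) subvariety of $G(N,2N)$ once $N$ is large enough, for instance by choosing $N\geq n$, fixing an inclusion $V^n\hookrightarrow V^{2N}$ together with a fixed complementary flag, and sending a $k$-plane $U\subset V^n$ to $U\oplus W$ for a fixed $(N-k)$-dimensional subspace $W$ of a fixed complement; this identifies $G(k,n)$ with a Schubert-type subvariety of $G(N,2N)$ on which $\mathcal{O}(1)$ restricts to the Pl\"ucker $\mathcal{O}(1)$ of $G(k,n)$. Under this embedding each hypersurface section $Y_i$ of $G(k,n)$ of degree $d_i$ extends to a hypersurface $\widetilde Y_i$ of the same degree in the Pl\"ucker space of $G(N,2N)$, and one adds further hypersurface sections (of degree $1$, coming from the linear equations cutting out $G(k,n)$ inside $G(N,2N)$, or absorbing them into the Schubert conditions) so that $X$ becomes a complete intersection $\widetilde X = G(N,2N)\cap\bigcap \widetilde Y_i$ inside $G(N,2N)$. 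The key point is that a path of length $k$ in $X$ is literally a path of length $k$ in $\widetilde X$, since projective lines on $X\subset G(k,n)$ are projective lines on $G(N,2N)$ and the polarization matches; so $P_k(u,v)$ computed in $X$ coincides with the corresponding path variety in $\widetilde X$.

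Then I would apply Theorem~\ref{main1} to $\widetilde X\subset G(N,2N)$. Its hypothesis requires $2\sum(\deg \widetilde Y_j + 1)\le [\tfrac{N}{2}]$. Since the degrees of the extra linear sections are $1$ and there are at most $n-k$ of them (or fewer after absorbing into Schubert data), and since we are free to take $N$ as large as we like — here is where the hypothesis $2\sum_i(d_i+1)\le \tfrac{k}{2}\le[\tfrac n4]$ comes in: it guarantees $n$ is large enough relative to the $d_i$, hence we can pick $N$ with $k$ paths in $G(N,2N)$ still controlled — one checks that the numerical inequality of Theorem~\ref{main1} holds. Theorem~\ref{main1} then gives that the length-$N$ path variety in $\widetilde X$ is non-empty and connected; a short additional argument (lines through a general point of a Grassmannian sweep out the whole Grassmannian, so a length-$N$ path can be contracted/extended to a length-$k$ path when $k\leq N$, using that $X$ is itself covered by lines) upgrades this to the statement about $P_k(u,v)$.

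The main obstacle I expect is the bookkeeping in the embedding $G(k,n)\hookrightarrow G(N,2N)$: one must verify simultaneously that (i) the embedding is linear in the sense of Definition~\ref{proektivnoe podmnogooobrazie}, so $\mathcal{O}(1)$ pulls back correctly and lines go to lines, (ii) the defining hypersurfaces of $X$ extend to hypersurfaces of the \emph{same} degrees whose intersection with $G(N,2N)$ is again a complete intersection of the expected codimension (no unexpected drop), and (iii) the numerical inequality $2\sum(d_j+1)\le[\tfrac N2]$ can be arranged from the given inequality $2\sum(d_i+1)\le\tfrac k2\le[\tfrac n4]$ by a suitable choice of $N$ — the factor comparisons between $k$, $n$, $N$ and $[\tfrac{N}{2}]$, $[\tfrac{n}{4}]$ are delicate and are the crux of why the hypothesis is stated in exactly this form. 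Once these three points are pinned down, the conclusion is immediate from Theorem~\ref{main1} together with the elementary reduction from length-$N$ to length-$k$ paths.
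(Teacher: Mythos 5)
Your strategy goes in the wrong direction and cannot be repaired as stated. You propose to embed $G(k,n)$ into a large $G(N,2N)$ and realize $X$ as a complete intersection $\widetilde X=G(N,2N)\cap\bigcap\widetilde Y_j$ so that Theorem~\ref{main1} applies there. But the image of $G(k,n)$ under the map $U\mapsto U\oplus W$ is a Schubert-type subvariety of $G(N,2N)$ whose codimension is $N^2-k(n-k)$, which grows quadratically in $N$; cutting it out (even set-theoretically, and it is not a complete intersection of hypersurfaces in the Pl\"ucker space at all) would require a number of equations far exceeding $n-k$, so the sum $2\sum(\deg\widetilde Y_j+1)$ grows like $N^2$ while the bound in Theorem~\ref{main1} is only $[\tfrac N2]$. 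The numerical hypothesis of Theorem~\ref{main1} therefore can never be satisfied for your $\widetilde X$, no matter how $N$ is chosen; this is not delicate bookkeeping but a structural failure. In addition, even granting such an $\widetilde X$, Theorem~\ref{main1} would give connectedness of the variety of length-$N$ paths in $\widetilde X$, and your ``short additional argument'' reducing length-$N$ paths to length-$k$ paths, and reducing paths in $\widetilde X$ to paths lying inside $X$, is not supplied and is not routine: connectedness of $P_N$ does not formally imply connectedness of $P_k$, and paths in $\widetilde X$ need not stay in $X$.

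The paper's proof goes the opposite way: for generic $u,v$ (the corresponding $k$-planes $U,V$ meeting in $0$) one observes that \emph{every} length-$k$ path in $G(k,n)$ joining $u$ and $v$ is automatically contained in the sub-Grassmannian $G(k,U\oplus V)\cong G(k,2k)$, so $P_k(u,v)$ computed in $X$ equals $P_k(u,v)$ computed in $G(k,U\oplus V)\cap\bigcap_i Y_i$. Theorem~\ref{main1} is then applied to this smaller complete intersection in $G(k,2k)$, where its hypothesis $2\sum_i(d_i+1)\le[\tfrac k2]$ is exactly the assumption of the corollary; no change of path length and no extra hypersurfaces are needed (the non-generic case is handled as in the proof of Theorem~\ref{main1} in \cite{ermakova}). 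The key idea you are missing is this downward restriction to $G(k,U\oplus V)$, which is what makes the reduction to Theorem~\ref{main1} immediate.
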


\begin{proof}[\textbf{Proof.}]
We will assume that points $u$ and $v$ in 
$G(k,n)$ are generic \footnote{i.e., the intersection of the corresponding $k$-planes is $0$.};
in the case when the points are not generic the proof 
goes in the same way the proof of Theorem  \ref{main1}, see \cite{ermakova}.

Let  $U$ and $V$ be the $k$-dimensional spaces corresponding to the points $u$ and $v$ of $G(k,n)$. To construct a path $p_k(u,v)$ connecting $u$ and $v$ consider the Grassmannian $G(k,2k)=G(k,U \oplus V)$ of  $k$-dimensional spaces in  the $2k$-dimensional vector space $U \oplus V$. 
It is easy to prove that any path  $p_k(u,v)$ of length $k$ in the Grassmannian $G(k,n)$ is contained in the Grassmannian $G(k,U \oplus V)$.  So all length $k$ paths connecting $u$ and $v$
in $X$ are contained in $G(k,U \oplus V)$.

Consider now the intersection $G(k,U \oplus V)\cap \bigcap_{i=1}^lY_{i} \subset X$. By Theorem \ref{main1} the variety of paths $P_k(u,v)$ of the intersection $G(k,U \oplus V)\cap \bigcap_{i=1}^lY_{i}$ is non-empty and connected. It follows that the variety of paths $P_k(u,v)$ in $X$ is non-empty and connected as well.
\end{proof}

\begin{proposition}[\cite{ermakova2}]\label{segre1} Consider the Segre embedding of  $\mathbb P^l\times \mathbb P^{m}$ in $\mathbb P^{(l+1)(m+1)-1}$. Chose natural numbers  $(k,d)$ such that $2kd<\min(l,m)$. Then for any variety  $Y$ in $\mathbb P^{(l+1)(m+1)-1}$
whose irreducible components have codimension at most $k$ and degree at most $d$ the variety  $Y\cap \mathbb P^l\times \mathbb P^{m}$ is  $1$-connected.
\end{proposition}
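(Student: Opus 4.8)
The plan is to exploit the two families of lines on the Segre variety. Write $S=\mathbb P^l\times\mathbb P^m\hookrightarrow\mathbb P^N$, $N=(l+1)(m+1)-1$, for the Segre embedding and set $Z=Y\cap S$. The lines lying on $S$ are exactly the ``vertical'' ones $\{p\}\times\ell$ (with $\ell\subseteq\mathbb P^m$ a line) and the ``horizontal'' ones $\ell'\times\{q\}$ (with $\ell'\subseteq\mathbb P^l$ a line), and each slice $S_p:=\{p\}\times\mathbb P^m$ is a \emph{linear} $\mathbb P^m\subseteq\mathbb P^N$, each $S^q:=\mathbb P^l\times\{q\}$ a linear $\mathbb P^l$. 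Put $F_p:=Y\cap S_p\subseteq\mathbb P^m$ and $G_q:=Y\cap S^q\subseteq\mathbb P^l$. Since $S_p$ is a linear subspace, comparing dimensions with the components of $Y$ shows that every irreducible component of $F_p$ has codimension $\le k$ in $\mathbb P^m$, and the Bezout inequality bounds its degree by $d$; the same holds for $G_q$ in $\mathbb P^l$. Finally, as $k\le 2kd<m$, every component of $Y$ meets every $S_p$, so all the $F_p$ and all the $G_q$ are non-empty.

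The first step would be to isolate the following auxiliary statement: \emph{a subvariety $W$ of $\mathbb P^M$, all of whose irreducible components have codimension $\le k$ and degree $\le d$, with $2kd<M$, is non-empty and $1$-connected.} Granting it, the proposition follows by interpolation. Let $z_0=(p_0,q_0)$ and $z_1=(p_1,q_1)$ be points of $Z$. Each of $F_{p_0},F_{p_1}\subseteq\mathbb P^m$ is non-empty with a component of dimension $\ge m-k>m/2$, so they meet; choose $q^\ast\in F_{p_0}\cap F_{p_1}$, whence $(p_0,q^\ast)$ and $(p_1,q^\ast)$ lie in $Z$. Applying the auxiliary statement to $F_{p_0}$ and $F_{p_1}$ (using $2kd<m$) and to $G_{q^\ast}$ (using $2kd<l$) produces a path in $F_{p_0}$ from $q_0$ to $q^\ast$, a path in $G_{q^\ast}$ from $p_0$ to $p_1$, and a path in $F_{p_1}$ from $q^\ast$ to $q_1$. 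Transporting these into $Z$ through the slices $\{p_0\}\times\mathbb P^m$, $\mathbb P^l\times\{q^\ast\}$, $\{p_1\}\times\mathbb P^m$ and concatenating gives a path in $Z$ from $z_0$ to $z_1$; hence $Z$ is $1$-connected.

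It remains to prove the auxiliary statement, for which I would use three soft reductions and one genuine computation. Non-emptiness is the bound $k<M$. Any two irreducible components of $W$ have dimension $\ge M-k>M/2$ and so meet, so it suffices to join two points lying in a single component and to pass between components through a common point; thus we may assume $W$ irreducible. Given $x,y\in W$, cutting $W$ by a general linear $\Lambda\cong\mathbb P^{2kd+1}$ containing the line $\overline{xy}$ preserves the hypotheses --- every component of $W\cap\Lambda$ still has codimension $\le k$ (by Krull's principal ideal theorem, $\Lambda$ being general enough to meet each component of $W$ in the expected dimension) and degree $\le d$ (by Bezout), while $2kd<\dim\Lambda$ --- and a path from $x$ to $y$ inside $W\cap\Lambda\subseteq W$ then suffices, so we may assume $M=2kd+1$. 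In this case, with $W$ irreducible of codimension $c\le k$ and degree $e\le d$, the computation runs as follows: at a smooth point $x\in W$ any line meeting $W$ at $x$ with multiplicity greater than $e=\deg W$ must lie in $W$ (Bezout), and such lines are cut out in $\mathbb P^{M-1}$ by at most $ce$ equations (the $(\le e)$-jets of local generators of $W$ at $x$), so the lines of $W$ through a general point form a scheme of dimension $\ge(M-1)-ce=2kd-ce\ge ce\ge 1$. From this positive-dimensional family of lines, together with the irreducibility of $W$, one deduces that $W$ is $1$-connected.

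The soft part --- the interpolation and the first three reductions --- is routine. The main obstacle is the passage from ``positive-dimensional family of lines through a general point'' to the full $1$-connectedness: one must also produce lines through \emph{every} point of $W$ (including singular ones, where the jet count does not directly apply) and establish the connectedness of the incidence variety of lines on $W$ --- being merely swept out by lines is not enough, as rational normal scrolls show. This is where the hypothesis $2kd<M$ is really consumed, through the dimension estimates for the scheme of lines; the rest of the argument is formal once this step is settled.
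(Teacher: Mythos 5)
This proposition is imported by the paper from \cite{ermakova2} without proof, so there is no internal argument to compare with; I can only assess your proposal on its merits. The reduction you perform is correct and clean: the slices $\{p\}\times\mathbb P^m$ and $\mathbb P^l\times\{q\}$ are linearly embedded, so the projective dimension theorem gives that every component of $F_p=Y\cap(\{p\}\times\mathbb P^m)$ is non-empty of codimension $\le k$ in $\mathbb P^m$, repeated hyperplane sections (or refined Bezout) give degree $\le d$, the bound $2(m-k)>m$ forces $F_{p_0}\cap F_{p_1}\neq\emptyset$, and the three-path concatenation through $q^\ast$ is valid because all the lines involved are lines of the Segre lying in $Y\cap(\mathbb P^l\times\mathbb P^m)$. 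So the proposition is correctly reduced to your auxiliary claim: a subvariety of $\mathbb P^M$ whose components have codimension $\le k$ and degree $\le d$, with $2kd<M$, is $1$-connected.

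The trouble is that this auxiliary claim carries essentially all the depth of the proposition, and your treatment of it has a genuine gap, which you yourself flag. Concretely: (i) the count ``lines through $x$ inside $W$ are cut out by at most $ce$ equations'' is unjustified as stated, because an irreducible variety of codimension $c$ and degree $e$ is in general not cut out set-theoretically by $c$ hypersurfaces of degree $\le e$; the standard substitute is to take $c$ cones over $W$ from general linear vertices, of degree $\le e$, whose intersection $Z$ contains $W$ only as an irreducible component --- and then the lines your dimension count produces lie in $Z$, not necessarily in $W$, so one must arrange that the residual components of $Z$ avoid the chosen point $x$, and do so for every point of $W$, singular points included; (ii) even granting a positive-dimensional family of lines of $W$ through each point, the sentence ``one deduces that $W$ is $1$-connected'' is precisely the missing argument --- as your own scroll remark shows, being swept out by lines does not give $1$-connectedness. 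A workable completion is to consider the cone $\Sigma_x\subseteq W$ swept by the lines of $W$ through $x$ (once (i) is repaired this has dimension $\ge M-ce$ at every $x$, by the same degree-$1,\dots,e_i$ equation count as in Proposition \ref{baza}), and to note that $\dim\Sigma_x+\dim\Sigma_y\ge 2(M-ce)\ge M$ forces $\Sigma_x\cap\Sigma_y\neq\emptyset$ in $\mathbb P^M$, yielding a two-line chain from $x$ to $y$ inside $W$; this is exactly where $2kd<M$ is consumed, and it would also make your reduction to $M=2kd+1$ and to irreducible $W$ unnecessary. As it stands, the proposal establishes the Segre interpolation but leaves the core $1$-connectedness statement unproved.
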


\begin{lemma} [\cite{ermakova2}]\label{newPinf} Let $Y_1,...,Y_l$ be hypersurfaces of degrees $d_1,...,d_l$ in $\mathbb P^n$.
Let $Y=Y_1\cap...\cap Y_l$ and let $\mathbb P^k\subset Y$ be a $k$-dimensional projective subspace. There exists a projective subspace $\mathbb P^{k+1}\subset Y$ containing $\mathbb P^k$ if the following holds: $$n-k-1>\sum_{i=1}^l\binom{d_i+k}{d_i-1}.$$
\end{lemma}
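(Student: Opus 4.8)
The plan is to translate the existence of the desired $\mathbb{P}^{k+1}$ into a system of homogeneous polynomial equations on a projective space of the correct dimension, and then invoke the projective dimension theorem to produce a solution. Fix a linear subspace $L\subset V:=\mathbb{C}^{n+1}$ of dimension $k+1$ with $\mathbb{P}(L)=\mathbb{P}^k$, and let $F_i$ be a homogeneous polynomial of degree $d_i$ defining $Y_i$. The hypothesis $\mathbb{P}^k\subset Y$ means precisely that $F_i|_L=0$ for every $i$. Any $\mathbb{P}^{k+1}$ containing $\mathbb{P}^k$ has the form $\mathbb{P}(L\oplus\langle w\rangle)$ for a vector $w$ in a fixed complement $W$ of $L$ in $V$; since rescaling $w$ does not change the plane and $W\cap L=0$, these planes are parametrized bijectively by $\mathbb{P}(W)\cong\mathbb{P}^{\,n-k-1}$.

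First I would expand, for a general point $u+tw$ of such a plane with $u\in L$, the restriction of $F_i$ by the Taylor/polarization formula
\[ F_i(u+tw)=\sum_{j=0}^{d_i} t^{\,j}\,G_{i,j}(u;w), \]
where $G_{i,j}(u;w)=\tfrac{1}{j!}\,D^{j}F_i(u)[w,\dots,w]$ is homogeneous of degree $d_i-j$ in $u$ (hence in the $k+1$ coordinates $s_0,\dots,s_k$ on $L$) and homogeneous of degree $j$ in $w$. The term $j=0$ is $F_i|_L$, which vanishes by hypothesis. Thus $\mathbb{P}(L\oplus\langle w\rangle)\subset Y_i$ if and only if $G_{i,j}(\,\cdot\,;w)\equiv 0$ as a polynomial in $s$ for every $j=1,\dots,d_i$. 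Each coefficient of $G_{i,j}$ with respect to the monomials in $s$ is a homogeneous polynomial of degree $j\ge 1$ in the coordinates of $w\in W$, so its vanishing cuts out a hypersurface in $\mathbb{P}(W)$ (scale-invariance of the condition is exactly this homogeneity).

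Next I would count these equations. A homogeneous polynomial of degree $d_i-j$ in the $k+1$ variables $s_0,\dots,s_k$ has $\binom{d_i-j+k}{k}$ coefficients, so $F_i$ contributes $\sum_{j=1}^{d_i}\binom{d_i-j+k}{k}$ equations. By the hockey-stick identity, $\sum_{e=0}^{d_i-1}\binom{e+k}{k}=\binom{d_i+k}{k+1}=\binom{d_i+k}{d_i-1}$, so the total number of equations over all $i$ is exactly $N:=\sum_{i=1}^{l}\binom{d_i+k}{d_i-1}$. A common solution $[w]\in\mathbb{P}(W)$ of this system then produces a plane $\mathbb{P}^{k+1}=\mathbb{P}(L\oplus\langle w\rangle)$ lying in every $Y_i$, hence in $Y$, and strictly larger than $\mathbb{P}^k$ since $w\notin L$.

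Finally, I would apply the projective dimension theorem: $N$ hypersurfaces in $\mathbb{P}(W)\cong\mathbb{P}^{\,n-k-1}$ have nonempty common intersection whenever $N\le n-k-1$, and the hypothesis $n-k-1>N$ secures this with room to spare, yielding in fact a positive-dimensional family of such planes. The steps requiring care are purely bookkeeping: verifying that the conditions are homogeneous of positive degree in $w$ and hence genuinely define subvarieties of $\mathbb{P}(W)$, and matching the equation count to the binomial in the statement via the hockey-stick identity above. No independence of the equations is needed, since the projective dimension theorem bounds the dimension of the intersection from below by the ambient dimension minus the number of hypersurfaces regardless of their relative position; equations that are identically zero only help. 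The substantive input, and the only real content of the argument, is thus the projective dimension theorem, which furnishes a common zero as soon as the number of equations does not exceed the dimension $n-k-1$.
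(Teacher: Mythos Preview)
Your argument is correct. The paper does not give its own proof of this lemma but merely quotes it from an earlier article, so there is no in-paper proof to compare against; your approach---parametrizing the $(k+1)$-planes through the fixed $\mathbb{P}^k$ by $\mathbb{P}(W)\cong\mathbb{P}^{\,n-k-1}$, expanding each $F_i$ in the transverse direction, counting the resulting homogeneous conditions via the hockey-stick identity to get exactly $\sum_i\binom{d_i+k}{d_i-1}$ equations, and then invoking the projective dimension theorem---is the standard and expected proof of this classical fact.
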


\begin{theorem}[\cite{ermakova2}]\label{uniform}
Any finite rank vector bundle  $\mathbf{E}$ on $\mathbf X$ is uniform.
\end{theorem}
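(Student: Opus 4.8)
The plan is to view the splitting type as a function on the (ind-)variety $\mathcal{L}$ of projective lines in $\mathbf{X}$ and to prove that this function is constant. First I would pass to the finite levels: writing $\mathbf{E}=\underleftarrow{\lim}E_m$, the compatibility $E_m\cong\phi_m^*E_{m+1}$ shows that for a line $l\subset X_m$ the splitting type of $\mathbf{E}|_l$ is already computed by $E_m|_l$. On the smooth projective variety $X_m$ the splitting type is upper-semicontinuous in algebraic families of lines, so on the connected family of lines in $X_m$ there is a well-defined \emph{generic splitting type} $\tau_m$, attained on an open dense subset $\mathcal{U}_m$, with every other line strictly more special in the dominance order. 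The compatibility of the inverse system forces the sequence $\tau_m$ to stabilize, and hence $\mathbf{E}$ has a generic splitting type $\tau_{\mathrm{gen}}$ attained on an open dense set $\mathcal{U}\subset\mathcal{L}$. The entire content of the theorem is then that the complementary \emph{jumping locus} $\mathcal{J}=\mathcal{L}\setminus\mathcal{U}$ is empty.

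The mechanism I would use to kill $\mathcal{J}$ is that $\mathbf{X}$ contains very large projective subspaces through every line. Given any line $l\subset\mathbf{X}$, I would combine the linearly embedded projective subspaces already present in the ind-Grassmannian $\mathbf{G}$ with Lemma \ref{newPinf}: since the degrees $d_1,\dots,d_l$ are fixed while the ambient dimensions $n_m\to\infty$, the numerical hypothesis $n-k-1>\sum_i\binom{d_i+k}{d_i-1}$ is eventually met for every $k$, so $l=\mathbb{P}^1$ extends to a chain $\mathbb{P}^1\subset\mathbb{P}^2\subset\mathbb{P}^3\subset\cdots$ inside $\mathbf{X}$, that is, to a linear ind-subspace $\mathbf{P}^\infty_l\subset\mathbf{X}$ containing $l$. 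The restriction $\mathbf{E}|_{\mathbf{P}^\infty_l}$ is a finite rank bundle on $\mathbb{P}^\infty$, so by the classical Barth--Van de Ven--Tyurin--Sato theorem recalled in the introduction it splits as a direct sum of line bundles. Consequently $\mathbf{E}$ has one and the same splitting type $\tau_l$ on every line of $\mathbf{P}^\infty_l$, in particular on $l$ itself.

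It then remains to prove $\tau_l=\tau_{\mathrm{gen}}$ for every $l$, and this is where the connectivity statements enter. Since all lines of $\mathbf{P}^\infty_l$ carry the single type $\tau_l$, the corresponding subfamily of $\mathcal{L}$ either lies entirely in $\mathcal{J}$ (if $\tau_l\neq\tau_{\mathrm{gen}}$) or lies entirely in $\mathcal{U}$ (if $\tau_l=\tau_{\mathrm{gen}}$); so I must exclude that the lines of some $\mathbf{P}^\infty_l$ are all special. I would do this by a dimension-and-degree estimate: the locus $\mathcal{J}$ is cut out by degeneracy conditions of bounded degree, and Proposition \ref{segre1}, together with the path-connectivity supplied by Theorem \ref{main1} and Corollary \ref{svjaznost' dlja G(k,n)}, shows that a sufficiently large family of lines cannot be swallowed by $\mathcal{J}$. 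Hence a generic line of $\mathbf{P}^\infty_l$ lies in $\mathcal{U}$, forcing $\tau_l=\tau_{\mathrm{gen}}$. Equivalently, one connects $l$ to a fixed generic line by a path $p_k(u,v)$ and propagates the now-constant type across the overlapping projective subspaces provided by Lemma \ref{newPinf}.

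The step I expect to be the main obstacle is precisely this last one: showing that the jumping locus is too small to contain all the lines of a large projective subspace of $\mathbf{X}$. Semicontinuity together with the splitting theorem on $\mathbb{P}^\infty$ only yields that the type is \emph{locally} constant along each $\mathbf{P}^\infty_l$; upgrading this to \emph{global} constancy requires the quantitative control of the special locus given by the connectivity results, and matching the numerical thresholds ($2\sum_i(d_i+1)\le k/2$, the Segre bound $2kd<\min(l,m)$, and the subspace bound of Lemma \ref{newPinf}) uniformly across the finite levels $X_m$ is the delicate bookkeeping on which the argument ultimately rests.
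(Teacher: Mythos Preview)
The paper does not contain a proof of Theorem~\ref{uniform}: the result is quoted from \cite{ermakova2} and used as a black box. So there is no argument in the present text to compare your proposal against, only the one in the cited reference.

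Your overall strategy---extend each line to a linear $\mathbb{P}^\infty$ inside $\mathbf{X}$ via Lemma~\ref{newPinf}, apply Barth--Van~de~Ven--Tyurin--Sato on that $\mathbb{P}^\infty$, and then propagate the splitting type by connectivity---is the right one and is in the spirit of \cite{ermakova2}. Steps~1--3 are fine. The weak point is Step~4, and you have correctly located it, but the fix you propose is not the right one. Invoking Proposition~\ref{segre1} to control the jumping locus $\mathcal{J}$ is a misapplication: that proposition asserts $1$-connectivity of an intersection with a Segre variety, which is exactly what is used to show that the base $B_m(x)$ of lines through a point is $1$-connected (as in the proof of Theorem~\ref{TheoremBaseInPoint}); it says nothing about the size or degree of $\mathcal{J}$ inside the global Fano scheme of lines on $X_m$.

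What is actually missing is the statement that \emph{any two lines through a common point $x$ carry the same splitting type}. Once you have that, your alternative formulation at the end of Step~4 works immediately: a path $p_k(u,v)$ is a chain of lines in which consecutive members meet, so the type is constant along the chain, and Corollary~\ref{svjaznost' dlja G(k,n)} makes every line reachable from a fixed generic one. To prove the missing pointwise statement, use the $1$-connectivity of $B_m(x)$ itself: a projective line in $B_m(x)$ parametrises a pencil of lines through $x$ whose union is a $\mathbb{P}^2\subset X_m$ (a degree-$d_i$ hypersurface containing an entire pencil of lines through a point must contain the plane they span), and Lemma~\ref{newPinf} then enlarges this $\mathbb{P}^2$ to a $\mathbb{P}^\infty$ on which the type is constant by Step~3. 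Chaining such planes along a path in $B_m(x)$ transports the splitting type from any line through $x$ to any other.
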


Finally we list the steps in our proof of Theorem
\ref{MainTheorem}.

\begin{itemize}

\item We  prove first that any finite rank vector bundle  $\mathbf E$ 
contains a flag of subbundles  $0=\mathbf{F_0}\subset \mathbf{F_1}\subset \mathbf{F_2}\subset ...\subset \mathbf{F_s}=\mathbf{E}$
such that each quotient $\mathbf{F_i}/\mathbf{F_{i-1}}$
is a bundle trivial on lines twisted by $\mathcal O(a_i)$ for   $1\leq i \leq s$.

\item  Next we prove that every finite rank bundle on 
$\mathbf X$ trivial on lines is trivial.

\item Finally, using Kodaira vanishing theorem we  prove that the bundle  $\mathbf{E}$ splits as a sum $\oplus_i \mathbf{F_i}/\mathbf{F_{i-1}}$.
  
\end{itemize}

\section{Constructing a flag of subbundles in $\mathbf{E}$}

In this section we will construct a flag of subbundles in a rank $r$ vector bundle $\mathbf{E}$ on a complete intersection $\mathbf{X} \subset \mathbf{G}$ of finite codimension. 

Before doing this formally we will describe the main idea.

Chose a point $x\in \mathbf{X}$ and consider the fibre $\mathbf{E}_x$ of  $\mathbf{E}$ over $x$. According to Grothendieck's theorem (\cite{OSS} Theorem 2.1.1), 
for any projective line $l$ passing through $x$ the restriction
$\mathbf{E}|_l$ has a canonical flag of subbundles
$0=\mathbf{F}_0\subset \mathbf{F}_1\subset ... \subset \mathbf{F}_s=\mathbf{E}|_l$. 

Hence, we get in  $\mathbf{E}_x$ a flag of subspaces that we denote by $\mathbf{F}(x,l)$. A priori the flag
 $\mathbf{F}(x,l)$ might depend on the choice of  $l$ passing through $x$ but we will show that this is not the case.

Let $B_m(x)$ be the \textit{base of the family of lines} on $X_m$ passing through $x$, considered as a reduced scheme. We will show that the map from $B_m(x)$ to the space of flags $\mathbf{E}_x$
that associates to each line  $l \subset B_m(x)$ the flag $\mathbf{F}(x,l)$ is a morphism for all $m$. After that we will apply the following theorem.
\begin{theorem} \label{TheoremBaseInPoint}
For any $d$ there exists a number $M:=M(d)$ such that for any $m>M$ 
any morphism from $B_m(x)$ to a projective variety of dimension 
less than $d$ is constant.
\end{theorem}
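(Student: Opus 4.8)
The plan is to exploit the structure of $B_m(x)$, the base of lines through a fixed point $x$ on $X_m = G(k_m,n_m)\cap\bigcap_{i=1}^l Y_{i,m}$, and show that as $m\to\infty$ this base becomes "highly connected" in a sense strong enough to kill all morphisms to low-dimensional varieties. First I would recall that for the Grassmannian $G(k_m,n_m)$ itself the lines through a point $x=[U]$ are parametrized by pairs (a hyperplane $U'\subset U$, a line $U''\supset U$ in $V^{n_m}$ with $U''/U$ one-dimensional modulo $U$), so the base of lines through $x$ in $G(k_m,n_m)$ is $\mathbb P(U^\vee)\times \mathbb P(V^{n_m}/U)\cong \mathbb P^{k_m-1}\times\mathbb P^{n_m-k_m-1}$, embedded via Segre into the Plücker-induced projective space. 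Imposing the $l$ hypersurfaces $Y_{i,m}$ of degrees $d_i$ cuts out a subvariety $B_m(x)$ whose components have codimension at most $l$ and degree at most $\prod d_i$ (or, more carefully, each $Y_{i,m}$ contributes a hypersurface section of controlled degree on the Segre product). Since $k_m, n_m-k_m\to\infty$, for $m$ large the hypotheses of Proposition \ref{segre1} apply with room to spare, and hence $B_m(x)$ is $1$-connected; moreover, by iterating Lemma \ref{newPinf} one obtains a projective subspace $\mathbb P^{N}\subset B_m(x)$ of arbitrarily large dimension $N=N(m)\to\infty$.

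The key reduction is then the following: a morphism $f\colon B_m(x)\to Z$ with $\dim Z<d$ restricted to a linear $\mathbb P^N\subset B_m(x)$ with $N\geq d$ must be constant, because a projective space admits no nonconstant morphism to a variety of smaller dimension (the generic fiber of $f|_{\mathbb P^N}$ would be positive-dimensional and proper, hence would meet any hypersurface section, forcing $f|_{\mathbb P^N}$ to factor through a point — concretely, $f^*\mathcal O_Z(1)$ would be either trivial or ample on $\mathbb P^N$; if ample it is $\mathcal O_{\mathbb P^N}(e)$ with $e\geq 1$ and the image has dimension $N\geq d$, contradiction; so it is trivial and $f|_{\mathbb P^N}$ is constant). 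Having fixed the value of $f$ on one large linear subspace, I would use $1$-connectedness of $B_m(x)$: any other point $y\in B_m(x)$ is joined to $\mathbb P^N$ by a chain of lines, and along each such line $f$ is again a morphism $\mathbb P^1\to Z$ which, being constant on the point of contact with the previously-determined region... — wait, a single line need not be constant. So the cleaner argument is to produce, through any two points of $B_m(x)$, a connecting chain all of whose lines lie in linear subspaces $\mathbb P^{\geq d}$ contained in $B_m(x)$; Lemma \ref{newPinf} gives such subspaces through any given smaller subspace, so one threads the chain through a sequence of large $\mathbb P^N$'s, on each of which $f$ is constant, and consecutive ones overlap, forcing $f$ to be globally constant.

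Concretely, the steps are: (1) identify $B_m(x)$ as a complete intersection of $l$ hypersurfaces of degrees $d_i$ inside the Segre $\mathbb P^{k_m-1}\times\mathbb P^{n_m-k_m-1}$, with $\max(k_m,n_m-k_m)\to\infty$; (2) choose $M(d)$ so that for $m>M(d)$ both $2l\cdot(\max_i d_i)<\min(k_m-1,n_m-k_m-1)$ (so Proposition \ref{segre1} gives $1$-connectedness) and the numerical hypothesis of Lemma \ref{newPinf} holds with $k=d$ (so $B_m(x)$ contains linear $\mathbb P^{d}$'s, indeed a $\mathbb P^d$ through any $\mathbb P^{d-1}$ inside it); (3) show any $f\colon B_m(x)\to Z$, $\dim Z<d$, is constant on every linear $\mathbb P^{d}\subset B_m(x)$ by the Picard-group/ampleness argument above; (4) connect arbitrary points of $B_m(x)$ by a chain of lines lying in a nested/overlapping family of such linear $\mathbb P^d$'s, using $1$-connectedness together with Lemma \ref{newPinf}, to conclude $f$ is globally constant. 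The main obstacle is step (4): passing from "constant on each large linear subspace" to "globally constant" requires that the connecting chains from $1$-connectedness can actually be arranged to live inside these linear subspaces (rather than wandering through low-dimensional strata where $f$ might vary), and making this compatibility precise — i.e., that $B_m(x)$ is not merely $1$-connected but "linearly connected through $\mathbb P^d$'s" — is the technical heart of the proof.
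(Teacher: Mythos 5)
Your ingredients and overall route are the same as the paper's (Proposition \ref{segre1} plus Proposition \ref{baza} for $1$-connectedness of $B_m(x)$, Lemma \ref{newPinf} for large linear subspaces, and the fact that $\mathbb{P}^N$ admits no nonconstant morphism to a variety of dimension $<d$), but your step (4), which you yourself flag as the unresolved ``technical heart,'' is where the argument is left with a genuine gap --- and the gap comes from dismissing the simple chaining argument too quickly. When you write ``wait, a single line need not be constant,'' you are wrong for the relevant lines: \emph{every} line $l\subset B_m(x)$ is contracted by $f$, because Lemma \ref{newPinf}, applied iteratively starting from $l$ itself (a line in the Segre product lies in a projective-space fiber, inside which $B_m(x)$ is again cut out by hypersurfaces of bounded number and degree, so for $m>M(d)$ the numerical hypothesis holds at every step $k=1,\dots,d-1$), produces a chain $l\subset\mathbb{P}^2\subset\dots\subset\mathbb{P}^d\subset B_m(x)$; by your own step (3), $f$ is constant on that $\mathbb{P}^d$, hence on $l$.

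Once you have this, the elaborate construction you propose --- fixing one large $\mathbb{P}^N$, then threading connecting chains through a ``nested/overlapping family'' of linear subspaces so that the chain never ``wanders through low-dimensional strata'' --- is unnecessary, and indeed this is exactly how the paper closes the argument: take any two points of $B_m(x)$, join them by a chain of lines ($1$-connectedness), observe that $f$ is constant on each line of the chain because each line sits in its own $\mathbb{P}^d$ (there is no need for these subspaces to be compatible or overlapping), and conclude from the fact that consecutive lines share a point that $f$ takes the same value at the two endpoints; hence $f$ is constant on all of $B_m(x)$. So the correct fix is not to strengthen $1$-connectedness to ``linear connectedness through $\mathbb{P}^d$'s,'' but to apply Lemma \ref{newPinf} to every line of an arbitrary chain; with that observation your proposal becomes a complete proof coinciding with the paper's.
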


\subsection{Proof of Theorem \ref{TheoremBaseInPoint}}

We will start by analysing $B_m(x)$. Recall that for $i=1,...,l$ the number $d_i=\deg Y_{i,m}$, ${m\geq1}$ denotes the degree of the
hypersuface ${Y_{i,m}}$.
Denote by $\mathbb{P}_{x}^{N_m-2}$  the projectivised tangent 
space  to the Pl{\"u}cker space $\mathbb{P}^{N_m-1}$
at point $x$.

\begin{proposition} \label{baza} 
$B_m(x)$ is given by the intersection of $\mathbb{P}^{k_m-1} \times \mathbb{P}^{n_m-k_m-1}$ with
$\sum_id_{i}$ hypersurfaces in  $\mathbb{P}_{x}^{N_m-2}$ of degrees
no more than $\max_i(d_i)$. 
\end{proposition}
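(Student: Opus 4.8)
The plan is to describe $B_m(x)$ in two stages: first the lines through $x$ lying on the Grassmannian $G(k_m,n_m)$ alone, and then the extra equations imposed by the hypersurfaces $Y_{1,m},\dots,Y_{l,m}$. Let $W\subset V^{n_m}$ be the $k_m$-dimensional subspace corresponding to $x$. Recall that a line on $G(k_m,n_m)$ in the Pl{\"u}cker embedding has the form $l_{A,B}=\{U:A\subset U\subset B\}$ with $\dim A=k_m-1$, $\dim B=k_m+1$, and that $x\in l_{A,B}$ iff $A\subset W\subset B$; such pairs correspond bijectively to pairs consisting of a hyperplane $A\subset W$ and a line in $V^{n_m}/W$, i.e. to points of $\mathbb P(W^\vee)\times\mathbb P(V^{n_m}/W)=\mathbb P^{k_m-1}\times\mathbb P^{n_m-k_m-1}$. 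Writing out the Pl{\"u}cker coordinates of a moving point of $l_{A,B}$, one sees that, after a choice of representative $\hat x\in\Lambda^{k_m}V^{n_m}$ of $x$, this line is $\{[\hat x+t\hat v]:t\}$ with $\hat v$ decomposable, and that under the identification $T_x\mathbb P^{N_m-1}\cong\Lambda^{k_m}V^{n_m}/\langle\hat x\rangle\supset T_xG(k_m,n_m)=\mathrm{Hom}(W,V^{n_m}/W)$ the class of $\hat v$ is exactly the rank-one homomorphism attached to $(A,B)$. Thus the base of lines through $x$ on $G(k_m,n_m)$ is precisely the Segre variety $\mathbb P^{k_m-1}\times\mathbb P^{n_m-k_m-1}\subset\mathbb P(T_xG(k_m,n_m))\subset\mathbb P_x^{N_m-2}$. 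I would state this identification with a pointer to the standard description of lines on Grassmannians; it accounts for the factor $\mathbb P^{k_m-1}\times\mathbb P^{n_m-k_m-1}$ in the Proposition.

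For the second stage, note that since $X_m\subset G(k_m,n_m)$, a line on $X_m$ through $x$ is precisely a line on $G(k_m,n_m)$ through $x$ that moreover lies on each $Y_{i,m}$. Fix $i$, let $F_i$ be a form of degree $d_i$ defining $Y_{i,m}$, and keep the representative $\hat x$. For a direction $v\in\mathbb P_x^{N_m-2}$ with representative $\hat v$, the line $\{[\hat x+t\hat v]\}$ lies on $Y_{i,m}$ iff $F_i(\hat x+t\hat v)=0$ identically in $t$. Expanding, $F_i(\hat x+t\hat v)=\sum_{j=0}^{d_i}P^{(i)}_j(\hat v)\,t^j$, where $P^{(i)}_j$ is the polar form of $F_i$ obtained by freezing $\hat x$, homogeneous of degree $j$ in $\hat v$; moreover $P^{(i)}_0=F_i(\hat x)=0$ because $x\in X_m\subset Y_{i,m}$. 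Hence $\{[\hat x+t\hat v]\}\subset Y_{i,m}$ iff $P^{(i)}_1(v)=\dots=P^{(i)}_{d_i}(v)=0$, a system of $d_i$ equations cut out by hypersurfaces in $\mathbb P_x^{N_m-2}$ of degrees $1,2,\dots,d_i$ respectively (if some $P^{(i)}_j$ vanishes identically the corresponding ``hypersurface'' is all of $\mathbb P_x^{N_m-2}$ and may be dropped).

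Putting the two stages together yields the Proposition: $B_m(x)$, as a reduced scheme, is the intersection of the Segre variety $\mathbb P^{k_m-1}\times\mathbb P^{n_m-k_m-1}\subset\mathbb P_x^{N_m-2}$ with the $\sum_{i=1}^l d_i$ hypersurfaces $\{P^{(i)}_j=0\}$ for $1\le i\le l$, $1\le j\le d_i$, each of degree $j\le\max_i(d_i)$. The argument is essentially a computation, so there is no deep obstacle; the two steps that need care are the precise identification of lines on the Grassmannian through $x$ with the Segre variety sitting in $\mathbb P_x^{N_m-2}$, and the bookkeeping of the degrees $j$ of the polar forms, which is what gives both the bound $\max_i(d_i)$ and the count $\sum_i d_i$. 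In particular the proof never uses that $x$ is a smooth point of $X_m$, only that $X_m$ lies in $G(k_m,n_m)$ and in each $Y_{i,m}$.
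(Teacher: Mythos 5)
Your proposal is correct and follows essentially the same route as the paper: quote the standard fact that lines through $x$ on $G(k_m,n_m)$ form $\mathbb{P}^{k_m-1}\times\mathbb{P}^{n_m-k_m-1}\subset\mathbb{P}_x^{N_m-2}$, then observe that each $Y_{i,m}$ imposes $d_i$ equations of degrees $1,\dots,d_i$ obtained from the expansion of its defining form at $x$ (your polar forms $P^{(i)}_j$ are exactly the components $F_j$ in the paper's decomposition $F=F_{d_i}+z_0F_{d_i-1}+\dots+z_0^{d_i-1}F_1$). The extra detail you give on the Segre identification is fine but not needed beyond what the paper assumes.
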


\begin{proof}[\textbf{Proof.}]
The base of the family of lines passing through 
$x$ on $G(k_m,n_m)$ is $\mathbb{P}^{k_m-1} \times  \mathbb{P}^{n_m-k_m-1}$. So it is enough to prove that for any $i$
the space of lines on $Y_{i,m}$  
passing through  $x$ is the intersection of $d_{i}$ hypersurfaces 
in $\mathbb{P}_{x}^{N_m-2}$.

Let us introduce homogeneous coordinates $(z_0:z_1:...)$ on $\mathbb{P}^{N_m-1}$. Let $x=(1:0:..:0)$,
then $(z_1:z_2:...)$ are homogeneous coordinates on
$\mathbb{P}_{x}^{N_m-2}$. Let us write the equation of 
the hypersuface $Y_{i,m}$ in the form
$$F=F_{d_i}(z_1:z_2:...)+z_0F_{d_i-1}(z_1:z_2:...)+...+z_0^{d_i-1}F_{1}(z_1:z_2:...)=0.$$
Then the variety of lines on $Y_{i,m}$ passing through $x$
is given in   $\mathbb{P}_{x}^{N_m-2}$
by a system of $d_i$ equations
$$F_{d_i}(z_1:z_2:...)=F_{d_i-1}(z_1:z_2:...)=...=0$$
of degrees $d_i,d_i-1,...,1$ correspondingly. 
This proves our claim.

\end{proof}

\begin{proof} [\textbf{Proof of Theorem \ref{TheoremBaseInPoint}.}]
Note that for any $d$ 
there is a number $M$
such that for any $m>M$
the following two conditions hold:

1) The variety $B_m(x)$ is  $1$-connected. 
This follows from Proposition \ref{segre1}
together with Proposition \ref{baza}.

2) Any projective line on $B_m(x)$
is contained in a projective subspaces 
$\mathbb{P}^d \subset B_m(x)$ of dimension $d$. 
This follows from Lemma \ref{newPinf}. 

From conditions 1) and 2) it follows that any morphism from $B_m(x)$ 
to any variety of dimension less than $d$ is a morphism to a point.
Indeed any line in $B_m(x)$ is mapped to a point since
it is contained in some  $\mathbb P^d$ (which in its turn has to be mapped to a point by \cite{H}, section II, $\S$ 7, exercise 7.3a).
Since any two points in $B_m(x)$ are connected by a chain 
of lines, the whole variety $B_m(x)$ is mapped to a point as well.
\end{proof}

\subsection{A standard lemma}

Further on we will need the following standard fact.
\begin{lemma}\label{lemmaBaseInPoint}  Let $X$, $Y$, $Z$ be projective varieties and suppose that $Y$ is smooth.
Suppose we have morphisms $p:X\to Y$ and $\pi:X\to Z$
such that the fibres of the morphism $p$ 
are contained in the fibres of the morphism $\pi$.
Then there is a morphism $f:Y\to Z$ such that $f \circ p = \pi$.
\[
\xymatrix{& X\ar[r]^{p}\ar[d]_{\pi} & Y\ar@{-->}[dl]^{f}\\
& Z }
\]
\end{lemma}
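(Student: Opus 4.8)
The plan is to realize $Z$ inside a projective space and show that the pullback $\pi^*\mathcal{O}_Z(1)$ together with its sections descends along $p$. Concretely, fix a very ample line bundle $\mathcal{O}_Z(1)$ on $Z$ and the embedding $Z \hookrightarrow \mathbb{P}^M$ it defines; it suffices to produce a morphism $f : Y \to \mathbb{P}^M$ with $f \circ p = \pi$, since then set-theoretically $f(Y) = f(p(X)) = \pi(X) \subset Z$ (here I use that $p$ is surjective — if it is not, replace $Y$ by the closed subvariety $p(X)$, which is projective and, being the image of a projective variety, we may then need $Y$ smooth only on this image; more cleanly, one reduces to the surjective case at the outset). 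Thus the whole problem is to descend a morphism to projective space, i.e.\ to descend the line bundle $\mathcal{L} := \pi^*\mathcal{O}_{\mathbb{P}^M}(1)$ and the $M+1$ sections $s_0,\dots,s_M := \pi^*(z_0),\dots,\pi^*(z_M)$ that generate it.

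The key steps, in order, are as follows. First, since the fibres of $p$ are contained in the fibres of $\pi$, the sheaf-theoretic pushforward $p_*\mathcal{L}$ is, on a suitable open cover, rank one: on any fibre $p^{-1}(y)$ the line bundle $\mathcal{L}$ is the pullback of a line bundle on a point, hence trivial, so $H^0(p^{-1}(y), \mathcal{L}|_{p^{-1}(y)})$ has the expected dimension. More robustly, I would argue as follows: the sections $s_i$ are constant on the fibres of $p$ up to the common scalar ambiguity of homogeneous coordinates, so the ratios $s_i/s_j$ are honest functions constant on fibres of $p$; since $p$ is a surjective morphism of projective varieties with $Y$ smooth (in particular normal), such a function descends to a rational function on $Y$ — this is where I would invoke that a function constant on the fibres of a proper surjective morphism onto a normal variety factors through the base (Stein factorization plus normality, or directly: the function lies in $p_*\mathcal{O}_X = \mathcal{O}_Y$ on the locus where it is regular). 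Working on the open sets $U_j = \{s_j \neq 0\}$, which cover $X$ and are unions of $p$-fibres hence of the form $p^{-1}(V_j)$ for opens $V_j \subset Y$ covering $Y$, the descended functions glue the local maps $V_j \to \mathbb{A}^M$ into a morphism $f : Y \to \mathbb{P}^M$ with $f \circ p = \pi$ by construction.

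The main obstacle is precisely the descent of the transition functions / the claim that $U_j = p^{-1}(V_j)$ for genuine open $V_j \subset Y$: this requires that $p$ be an open map, or at least that the image of a $p$-saturated open set be open, which follows from $p$ being surjective and proper (hence closed) — the complement $X \setminus U_j$ is closed and $p$-saturated, so its image is closed in $Y$, and its complement $V_j$ is the desired open set with $p^{-1}(V_j) = U_j$. The smoothness (normality) of $Y$ is used exactly once, to guarantee that the fibrewise-constant rational functions $s_i/s_j$ extend to regular functions on all of $V_j$ rather than merely on a dense open subset. I would therefore organize the write-up as: (i) reduce to $p$ surjective; (ii) reduce to $Z = \mathbb{P}^M$ via a very ample bundle; (iii) build the saturated open cover $U_j = p^{-1}(V_j)$ using properness; (iv) descend $s_i/s_j$ on each $V_j$ using normality of $Y$; (v) glue to get $f$ and check $f \circ p = \pi$ on each $U_j$, then check $f(Y) \subset Z$.
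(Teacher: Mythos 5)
Your route is genuinely different from the paper's. The paper's proof is a short graph argument: it considers $\phi=(p,\pi):X\to Y\times Z$ and the image $W=\phi(X)$; the projection $W\to Y$ is proper, injective (precisely because $p$-fibres lie in $\pi$-fibres) and surjective, hence bijective, and since $Y$ is smooth, hence normal, this projection is an isomorphism in characteristic $0$ (the cited theorem from Shafarevich, a Zariski-main-theorem type statement); then $f$ is the projection to $Z$ composed with the inverse. Your plan instead embeds $Z$ in $\mathbb{P}^M$ and descends the coordinate ratios $s_i/s_j$ along $p$ over a saturated open cover. Steps (ii), (iii) and (v) of your outline are sound: the sets $U_j$ are unions of $\pi$-fibres, hence $p$-saturated, and closedness of the proper map $p$ does give opens $V_j\subset Y$ with $p^{-1}(V_j)=U_j$ covering $Y$.

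The weak point is step (iv), the descent itself. The justification you offer, that the function ``lies in $p_*\mathcal{O}_X=\mathcal{O}_Y$,'' is not available: $p_*\mathcal{O}_X\cong\mathcal{O}_Y$ requires the fibres of $p$ to be connected, and nothing in the hypotheses forces this --- the $p$-fibres are only assumed to sit inside $\pi$-fibres and may well be disconnected. Stein factorization alone does not finish the argument either: it yields a finite surjective, in general non-birational, morphism $Y'\to Y$ through which $s_i/s_j$ factors, and one still has to descend along $Y'\to Y$ a function constant on its fibres. The fact you actually need (a regular function constant on the fibres of a proper surjective morphism onto a normal variety is a pullback from the base) is true, but its standard proof is exactly the graph-plus-normality argument: the image of $(p,\,s_i/s_j)$ in $V_j\times\mathbb{A}^1$ projects bijectively and properly onto the normal variety $V_j$, hence isomorphically. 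Once that ingredient is invoked, it is simpler to apply it directly to $(p,\pi)$, which gives the whole lemma at once --- this is what the paper does --- so your argument is repairable, but after repair it is a longer detour through the same key point. One further remark: the lemma implicitly assumes $p$ surjective; without that it is false (take $X$ a conic in $Y=\mathbb{P}^2$, $p$ the inclusion, $\pi:X\to Z=\mathbb{P}^1$ an isomorphism --- no morphism $\mathbb{P}^2\to\mathbb{P}^1$ restricts to it), and your proposed reduction ``replace $Y$ by $p(X)$'' cannot rescue that case since $f$ must be defined on all of $Y$. The paper's own proof makes the same implicit assumption (it asserts bijectivity), and in the application $p$ is surjective, so this is a shared, not a personal, gap.
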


\begin{proof}[\textbf{Proof.}]
Consider the morphism $\phi: X\to Y\times Z$, $\phi(x)=(p(x),\pi(x))$ and denote by $p'$, $\pi'$
the projections of $Y\times Z$ on $Y$ and $Z$ correspondingly.
\[
\xymatrix{& X\ar[r]^{p}\ar[d]_{\pi}\ar[dr]_{\phi} & Y\\
& Z & Y \times Z \ar[l]^{\pi'} \ar[u]_{p'} }
\]
Note that the projection 
$p'_1: \phi(X)\to Y$ is an isomorphism (\cite{Sh}, section II.4, Theorem 2) since by our assumptions the projection $p'$
is a bijective morphism and  $Y$ is smooth. The desired 
morphism  $f:Y\to Z$ is given by the composition
$f=\pi'\circ p'^{-1}$.
\end{proof}

\subsection{Constructing a flag of subbundles in $\mathbf{E}$ on $X_m$}
Finally, we  start to construct the flag of subbundles.
Let  $B_m$ be the base of the family of lines on $X_m$.
Let us consider the following set
$$\Gamma_m=\{(x,l)\in X_m \times B_m|x \in l\}$$
as a reduced scheme. Denote by $p_m: \Gamma_m \rightarrow X_m$ the morphism such that $p_m(x,l)= x$.

Recall that $\mathbf{E}|_{X_m}=E_m$ and the rank of $E_m$ is $r$. 
The fibre at  point $x \in X_m$ is denoted by  $E_m(x)$.

According to Theorem \ref{uniform}
the bundle $E_m$ is uniform. So for any line $l \in B_m$ we have 
the following splitting 
$$E_m|_l=r_1\mathcal{O}_{\mathbb{P}^1}(a_1) \oplus r_2\mathcal{O}_{\mathbb{P}^1}(a_2) \oplus ...\oplus r_s\mathcal{O}_{\mathbb{P}^1}(a_s), \: a_1>a_2>...>a_s, \: \sum_{i=1}^{s}r_i=r,$$ where
$r_i$ and $a_i$ do not depend on the choice of the line $l$.

\begin{theorem} \label{ChainOfSubbandels} Let $\mathbf{X}=\underrightarrow{\lim}X_m$ be a complete intersection
of codimension $l$ in the linear ind-Grassmannian $\mathbf{G}$.
For any $m\geq 1$ the bundle $E_m$ on $X_m$ has 
a flag of subbundles 
$$0=F_0 \subset F_1 \subset F_2 \subset \ldots \subset F_s = E_m$$
such that for any line $l \in B_m$
\begin{equation}\label{F_i|l} F_i|_l=r_1\mathcal{O}_{\mathbb{P}^1}(a_1) \oplus \ldots \oplus r_i\mathcal{O}_{\mathbb{P}^1}(a_i), \: 1\leq i \leq s.
\end{equation}
\end{theorem}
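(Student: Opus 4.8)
The plan is to build the flag fiberwise over $X_m$, using the Grothendieck canonical flag on each line through a point, and then to verify that this fiberwise construction actually glues into a flag of subbundles. Fix a point $x\in X_m$ and a line $l\in B_m(x)$. By Grothendieck's theorem the restriction $E_m|_l$ has its canonical Harder--Narasimhan-type flag $0=\mathbf F_0\subset\mathbf F_1\subset\cdots\subset\mathbf F_s=E_m|_l$, where $\mathbf F_i$ is the subbundle of sections of slope $\geq a_i$; evaluating at $x$ yields a flag $\mathbf F(x,l)$ of subspaces of $E_m(x)$ of type $(r_1,r_1+r_2,\dots)$. The first key step is to show $\mathbf F(x,l)$ is independent of the choice of $l\ni x$. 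To do this I would consider the map sending $l\in B_m(x)$ to the point $\mathbf F(x,l)$ in the (finite-dimensional, projective) flag variety $\mathrm{Fl}(r_1,r_1+r_2,\dots;E_m(x))$. One checks this is a morphism: over the incidence variety $\Gamma_m(x)=\{(y,l): y\in l, l\in B_m(x)\}$ the canonical subbundles of the family $E_m|_l$ vary algebraically (the slope-$\geq a_i$ subbundle is cut out by semicontinuity/relative-$\mathrm{Ext}$ arguments on the flat family of lines, using that the bundle is uniform so the splitting type is constant), hence restricting to the section $y=x$ gives a morphism $B_m(x)\to\mathrm{Fl}$. Now apply Theorem~\ref{TheoremBaseInPoint} with $d=\dim\mathrm{Fl}+1$: for $m$ large this morphism is constant, so $\mathbf F(x,l)=:\mathbf F_i(x)$ depends only on $x$. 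For the finitely many small $m\leq M(d)$ one argues separately — but in fact one only needs the flag of subbundles on each $X_m$, and one can obtain it by pulling back from a large $X_{m'}$ along $\phi$, since $E_m\cong\phi^*E_{m'}$ and a flag of subbundles pulls back to a flag of subbundles; so it suffices to treat large $m$.

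The second key step is to promote the pointwise-defined subspaces $\mathbf F_i(x)\subset E_m(x)$ to honest subbundles $F_i\subset E_m$ on $X_m$. The natural approach is to use Lemma~\ref{lemmaBaseInPoint}: consider $\Gamma_m=\{(x,l)\in X_m\times B_m: x\in l\}$ with its two projections $p_m:\Gamma_m\to X_m$ and the "canonical subbundle" data living over $\Gamma_m$. Concretely, over $\Gamma_m$ we have the rank-$(r_1+\cdots+r_i)$ subbundle $\widetilde F_i\subset p_m^*E_m$ given fiberwise at $(x,l)$ by $\mathbf F_i(x,l)\subset E_m(x)$ — a subbundle because it is the relative slope-$\geq a_i$ subbundle of the family of restrictions to lines. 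By the independence established in step one, $\widetilde F_i$ is constant along the fibers of $p_m$ (which are the bases of lines through a fixed point), so it descends: applying Lemma~\ref{lemmaBaseInPoint} to the classifying maps $\Gamma_m\to\mathrm{Gr}(r_1+\cdots+r_i,E_m(x))$-bundle (or more directly, since the fibers of $p_m$ are collapsed by the section-of-subbundle data and $X_m$ is smooth) yields a subbundle $F_i\subset E_m$ on $X_m$ with $p_m^*F_i=\widetilde F_i$. The inclusions $F_1\subset\cdots\subset F_s=E_m$ are inherited from those of the $\widetilde F_i$, and restricting $F_i$ to any line $l\in B_m$ recovers $\mathbf F_i$, which is exactly the asserted splitting~\eqref{F_i|l}: $F_i|_l=r_1\mathcal O_{\mathbb P^1}(a_1)\oplus\cdots\oplus r_i\mathcal O_{\mathbb P^1}(a_i)$.

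The main obstacle I anticipate is the descent step — rigorously showing that "$\mathbf F_i(x,l)$ is independent of $l$" upgrades to "$\widetilde F_i$ descends to a subbundle $F_i$ on $X_m$." One must ensure that the fibers of $p_m:\Gamma_m\to X_m$ are connected (so that constancy of a morphism on each fiber is meaningful and forces factorization) — this is where $1$-connectedness of $B_m(x)$ from Theorem~\ref{TheoremBaseInPoint}'s proof (via Propositions~\ref{segre1} and~\ref{baza}) is used — and that $p_m$ is nice enough (flat, or at least that $X_m$ smooth suffices) to invoke Lemma~\ref{lemmaBaseInPoint} in the bundle-valued setting. A secondary subtlety is verifying that the relative canonical filtration of $p_m^*E_m$ along the line directions is genuinely a filtration by subbundles and not merely by subsheaves; here uniformity of $E_m$ (Theorem~\ref{uniform}) is essential, since constancy of the splitting type across all lines is what makes the ranks of the graded pieces locally constant and hence the filtration locally split. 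Once these points are in place, the statement follows.
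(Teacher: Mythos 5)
Your proposal is correct and takes essentially the same route as the paper: constancy of the fibrewise canonical flag over the base of lines through a fixed point, obtained from Theorem \ref{TheoremBaseInPoint}, followed by descent from the incidence variety $\Gamma_m$ to $X_m$ via Lemma \ref{lemmaBaseInPoint} and the tautological subbundle of a Grassmann (or flag) bundle, with the case of small $m$ handled by restriction from a large $X_{m'}$. The only cosmetic difference is that you assemble the whole flag at once inside a flag bundle, whereas the paper builds it step by step through successive grassmannizations $\mathfrak{Gr}(r_i,E_{(i)})$ and quotients $E_{(i+1)}=E_{(i)}/f_i^*\mathfrak{S}_{r_i}$.
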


\begin{proof}[\textbf{Proof.}]
It is sufficient to prove this statement for all 
$m$ from Theorem \ref{TheoremBaseInPoint} that are larger than
$M(2r)$. Let $E_{(1)}:=E_m$.
Let
$$\mathfrak{Gr} (r_1,E_{(1)})=\bigcup_{x \in X_m} G(r_1,E_{(1)}(x))$$
be the grassmannisation of the bundle $E_{(1)}$
with its natural projection  $\varphi_1$ $:\mathfrak{Gr} (r_1,E_{(1)}) \rightarrow X_m$, where $\varphi_1:(x,r_1\mathcal{O}_{\mathbb{P}^1}(a_1)|_x)\mapsto x$.
Consider the morphism $\pi_1:\Gamma_m\rightarrow \mathfrak{Gr}(r_1,E_{(1)})$ where $\pi_1:(x,l)\mapsto (x,r_1\mathcal{O}_{\mathbb{P}^1}(a_1)|_x)$.
Let us show that the fibres of the morphism
$p_m:(x,l)\mapsto x$ are contained in the fibres of the morphism $\pi_1$. Indeed, for any point $x \in X_m$ the fibre $p_m^{-1}(x)$ of the morphism $p_m$
is isomorphic to the base of the family of lines passing though $x$
on  $X_m$. The morphism $\pi_1$ sends $p_m^{-1}(x)$ to $\mathfrak{Gr}(r_1,E_{(1)}(x))$ and this map is a map to a point according to Theorem \ref{TheoremBaseInPoint} since $\dim \mathfrak{Gr}(r_1,E_{(1)}(x))<2r$.

Consider the following diagram:
\[
\xymatrix{\Gamma_m \ar[r]^{p_m}\ar[d]_{\pi_1} & X_m \ar@{-->}[dl]_{f_1}\\
\mathfrak{Gr}(r_1,E_{(1)})\ar@/_1pc/[ur]_{\varphi_1}}
\]
The existence of the section  $f_1$ of the projection $\varphi_1$ follows from Lemma \ref{lemmaBaseInPoint} in which we set $X=\Gamma_m$, $Y=X_m$, $Z=\mathfrak{Gr}(r_1,E_{(1)})$, $p=p_m$, $\pi=\pi_1$.

Denote by $\mathfrak{S}_{r_1}$   the tautological $r_1$-dimensional  subbundle in $\varphi_1 ^*E_{(1)}$.
Applying the functor $f^*_1$ to the monomoprphism of bundles  
 $$\tau_1: \mathfrak{S}_{r_1} \longrightarrow \varphi_1 ^*E_{(1)},$$
we get the following monomorphism of bundles:
$$f_1^* \tau _1:f_1^*\mathfrak{S}_{r_1} \rightarrow E_{(1)}.$$
Denote $E_{(2)}=E_{(1)}/_{f_1^*\mathfrak{S}_{r_1}}$,  
and consider the grassmannisation 
$$\mathfrak{Gr} (r_2,E_{(2)})=\bigcup_{x \in X_m} G(r_2,E_{(2)}(x))$$
of the bundle $E_{(2)}$.

We get the following diagram
\[
\xymatrix{\Gamma_m \ar[r]^{p_m}\ar[d]_{\pi_2} & X_m \ar@{-->}[dl]_{f_2}\\
\mathfrak{Gr}(r_2,E_{(2)})\ar@/_1pc/[ur]_{\varphi_2}},
\]
where the existence of the morphism $f_2$ follows from Lemma \ref{lemmaBaseInPoint} in the same way as the existence of 
the morphism  $f_1$. By the same considerations as before 
we get the embedding
$$f_2^* \tau _2:f_2^*\mathfrak{S}_{r_2} \rightarrowtail E_{(2)}.$$
Denote
$$E_{(3)}=E_{(2)}/_{f_2^*\mathfrak{S}_{r_2}}.$$

Repeating our previous considerations we get a family of 
epimorphisms of bundles:
$$E_{(1)} \twoheadrightarrow E_{(2)} \twoheadrightarrow \ldots \twoheadrightarrow E_{(s+1)}.$$ 
We associate to it a family of subbundles 
in  $E_{(1)}$:
$$0=F_0 \subset F_1 \subset F_2 \subset \ldots \subset F_s = E_{(1)}$$
such that for any line $l \in B_m$
$$F_i|_l \cong r_1\mathcal{O}_{\mathbb{P}^1}(a_1) \oplus \ldots \oplus r_i\mathcal{O}_{\mathbb{P}^1}(a_i), \: 1\leq i \leq s.$$
\end{proof}

\begin{corollary} \label{F_i/F_i-1=r_iO(a_i)} Any bundle $F_{i}/F_{i-1}$ is  a bundle trivial on lines
twisted by the line bundle $\mathcal{O}_{X_m}(a_i)$.
\end{corollary}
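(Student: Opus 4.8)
The plan is to check the assertion line by line, reducing everything to the splitting type \eqref{F_i|l} obtained in Theorem \ref{ChainOfSubbandels}. Fix a line $l \in B_m$. Since $F_i/F_{i-1}$ is a vector bundle (in the construction of Theorem \ref{ChainOfSubbandels} it is the subbundle $f_i^*\mathfrak{S}_{r_i}$, hence locally free), the short exact sequence $0 \to F_{i-1} \to F_i \to F_i/F_{i-1} \to 0$ stays exact after restriction to $l$, so $(F_i/F_{i-1})|_l$ is the quotient of $F_i|_l \cong \bigoplus_{j=1}^{i}\mathcal{O}_{\mathbb{P}^1}(a_j)^{r_j}$ by the image of the inclusion $F_{i-1}|_l \hookrightarrow F_i|_l$, where by \eqref{F_i|l} one has $F_{i-1}|_l \cong \bigoplus_{j=1}^{i-1}\mathcal{O}_{\mathbb{P}^1}(a_j)^{r_j}$.

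Next I would exploit the strict inequalities $a_1 > a_2 > \dots > a_i$. Fixing a splitting $F_i|_l \cong \bigoplus_{j=1}^{i}\mathcal{O}_{\mathbb{P}^1}(a_j)^{r_j}$, let $P := \bigoplus_{j=1}^{i-1}\mathcal{O}_{\mathbb{P}^1}(a_j)^{r_j} \subset F_i|_l$ be the corresponding subbundle. The composite of $F_{i-1}|_l \hookrightarrow F_i|_l$ with the projection $F_i|_l \to F_i|_l/P \cong \mathcal{O}_{\mathbb{P}^1}(a_i)^{r_i}$ is assembled from homomorphisms $\mathcal{O}_{\mathbb{P}^1}(a_j) \to \mathcal{O}_{\mathbb{P}^1}(a_i)$ with $j < i$, and each of these vanishes because $\mathrm{Hom}(\mathcal{O}_{\mathbb{P}^1}(a_j),\mathcal{O}_{\mathbb{P}^1}(a_i)) = H^0(\mathbb{P}^1,\mathcal{O}_{\mathbb{P}^1}(a_i-a_j)) = 0$ when $a_j > a_i$. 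Hence the image of $F_{i-1}|_l$ lies in $P$, and $F_{i-1}|_l \hookrightarrow P$ is an injection of bundles of the same rank on $\mathbb{P}^1$, so its cokernel $C$ is torsion. From the induced sequence $0 \to C \to (F_i/F_{i-1})|_l \to F_i|_l/P \cong \mathcal{O}_{\mathbb{P}^1}(a_i)^{r_i} \to 0$ and the torsion-freeness of the locally free sheaf $(F_i/F_{i-1})|_l$ I conclude $C=0$, i.e. $(F_i/F_{i-1})|_l \cong \mathcal{O}_{\mathbb{P}^1}(a_i)^{r_i}$.

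Therefore $(F_i/F_{i-1}) \otimes \mathcal{O}_{X_m}(-a_i)$ restricts to the trivial bundle on every line $l \in B_m$, so it is trivial on lines; equivalently $F_i/F_{i-1}$ is a bundle trivial on lines twisted by $\mathcal{O}_{X_m}(a_i)$, which is the claim. I do not expect a genuine obstacle here: the statement is a formal consequence of \eqref{F_i|l} together with the vanishing of $H^0$ of negative line bundles on $\mathbb{P}^1$. The only point requiring a line of care is the identification of the fibrewise quotient with the restriction of the globally defined bundle $F_i/F_{i-1}$ — that is, exactness of the restricted sequence — which is exactly where local freeness of $F_i/F_{i-1}$, guaranteed by the construction in Theorem \ref{ChainOfSubbandels}, is used.
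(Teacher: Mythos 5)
Your proposal is correct and follows essentially the same route as the paper: the paper simply observes that by the construction in Theorem \ref{ChainOfSubbandels} the restriction of $F_i/F_{i-1}$ to any line $l\in B_m$ equals $r_i\mathcal{O}_{\mathbb{P}^1}(a_i)$, so that $F_i/F_{i-1}\otimes\mathcal{O}_{X_m}(-a_i)$ is trivial on lines. You merely spell out the standard $\mathbb{P}^1$ computation (vanishing of $\mathrm{Hom}(\mathcal{O}_{\mathbb{P}^1}(a_j),\mathcal{O}_{\mathbb{P}^1}(a_i))$ for $a_j>a_i$ and torsion-freeness of the locally free quotient) that the paper leaves implicit.
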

Indeed, from the construction of the flag of subbundles 
$E_m$ it follows that the restriction of  $F_{i}/F_{i-1}$ to any line $l \in B_m$ is equal to
$r_i\mathcal{O}_{\mathbb{P}^1}(a_i)$ for $1\leq i\leq s$.
In other words the bundle $F_{i}/F_{i-1}\otimes \mathcal{O}_{X_m}(-a_i)$ is  trivial on lines.

\begin{zam}Subbundles $F_i$ of the bundle $E_m$ in Theorem \ref{ChainOfSubbandels}  are defined in a unique way. This follows from the fact
that any vector bundle $E$ on a projective line $\mathbb{P}^1$ has a canonically defined filtration $0 = F_0 \subset F_1 \subset F_2 \subset ... \subset F_s=E_m$ such that $F_{i}/F_{i-1}$ is isomorphic to $r_iO(a_i)$ where $a_{1}>a_{2}>...>a_s$.
\end{zam}

Using the fact that the flag of subbundles 
$0 = F_0 \subset F_1 \subset F_2 \subset ... \subset F_s$
constructed in Theorem \ref{ChainOfSubbandels} does not depend on $m$,
and using the linearity of the ind-variety $\mathbf{X}=\underrightarrow{\lim}X_m$ 
we get the following corollary from Theorem \ref{ChainOfSubbandels}.

\begin{theorem} \label{podkrutka} Let $\mathbf{X}$ 
be a complete intersection in the linear ind-Grassmannian  $\mathbf{G}$ and let $\mathbf{E}$ be a uniform bundle on $\mathbf{X}$. 
Then there exists a flag of subbundles 
$$0 =\mathbf{F_0} \subset \mathbf{F_1} \subset \ldots \subset \mathbf{F_s} = \mathbf{E}$$
such that any quotient bundle 
$\mathbf{F_{i}}/\mathbf{F_{i-1}}$ is a  bundle trivial on lines twisted by a line bundle.
\end{theorem}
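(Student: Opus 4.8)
The plan is to promote the $m$-level statement of Theorem~\ref{ChainOfSubbandels} to the ind-level by checking that the filtrations on the finite-dimensional pieces $X_m$ are compatible with the transition maps $\phi_m$, and then to identify the twist. First I would fix $m$ large enough that Theorem~\ref{ChainOfSubbandels} applies to every $X_{m'}$ with $m'\ge m$; by Theorem~\ref{uniform} the bundle $\mathbf E$ is uniform, so its restriction $E_{m'}=\mathbf E|_{X_{m'}}$ is uniform with a splitting type $(r_1,a_1),\dots,(r_s,a_s)$ that is independent of $m'$ (the transition isomorphisms $E_{m'}\cong\phi_{m'}^*E_{m'+1}$ restrict to lines, so the splitting type is stable along the chain). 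For each such $m'$ Theorem~\ref{ChainOfSubbandels} gives a flag $0=F_0^{(m')}\subset\cdots\subset F_s^{(m')}=E_{m'}$ whose restriction to every line $l\in B_{m'}$ is the canonical Grothendieck filtration.

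The key step is compatibility: I claim $\phi_{m'}^*F_i^{(m'+1)}\cong F_i^{(m')}$ as subbundles of $E_{m'}=\phi_{m'}^*E_{m'+1}$. This follows from the uniqueness noted in Remark~1 (the filtration in Theorem~\ref{ChainOfSubbandels} is unique, since on each line it is the intrinsically defined Harder--Narasimhan-type filtration of Grothendieck's theorem): a line $l\subset X_{m'}$ maps to a line $\phi_{m'}(l)\subset X_{m'+1}$ because $\mathbf X$ is linear, and $\phi_{m'}^*(F_i^{(m'+1)}|_{\phi_{m'}(l)})=F_i^{(m'+1)}|_{\phi_{m'}(l)}$ under the identification $l\cong\phi_{m'}(l)$, which by construction equals $r_1\mathcal O(a_1)\oplus\cdots\oplus r_i\mathcal O(a_i)=F_i^{(m')}|_l$. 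Since both $\phi_{m'}^*F_i^{(m'+1)}$ and $F_i^{(m')}$ are subbundles of $E_{m'}$ restricting to this same canonical sub on every line in $B_{m'}$, the uniqueness statement forces them to coincide. Hence $\{F_i^{(m')}\}_{m'\ge m}$ is an inverse system and defines a subbundle $\mathbf F_i:=\underleftarrow{\lim}\,F_i^{(m')}$ of $\mathbf E$ in the sense of the paper's definition of vector bundles on ind-varieties, giving the flag $0=\mathbf F_0\subset\mathbf F_1\subset\cdots\subset\mathbf F_s=\mathbf E$.

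It remains to describe the quotients. For each $m'$, Corollary~\ref{F_i/F_i-1=r_iO(a_i)} says $F_i^{(m')}/F_{i-1}^{(m')}$ is a bundle trivial on lines twisted by $\mathcal O_{X_{m'}}(a_i)$; passing to the limit, the quotient $\mathbf F_i/\mathbf F_{i-1}$ is the inverse limit of these, so $(\mathbf F_i/\mathbf F_{i-1})\otimes\mathcal O_{\mathbf X}(-a_i)$ restricts to a trivial bundle on every line of $\mathbf X$. Here I use that the line bundle $\mathcal O_{\mathbf X}(1)=\underleftarrow{\lim}\,\mathcal O_{X_{m'}}(1)$ exists by linearity of $\mathbf X$, so $\mathcal O_{\mathbf X}(a_i)$ makes sense and restricts compatibly to $\mathcal O_{X_{m'}}(a_i)$. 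Therefore $\mathbf F_i/\mathbf F_{i-1}$ is a bundle trivial on lines twisted by the line bundle $\mathcal O_{\mathbf X}(a_i)$, which is exactly the assertion.

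The main obstacle I anticipate is the compatibility step --- making sure that the flags produced by the independent applications of Theorem~\ref{ChainOfSubbandels} on different $X_{m'}$ genuinely glue, rather than merely agreeing on lines. The clean way around this is the uniqueness observation (Remark~1): because the flag is characterized fibrewise-on-lines by an intrinsic construction, and because every line in $X_{m'}$ sits inside a line in $X_{m'+1}$ via the linear embedding $\phi_{m'}$, the pullback $\phi_{m'}^*F_i^{(m'+1)}$ automatically satisfies the characterizing property of $F_i^{(m')}$ and hence equals it. Once this is in place, everything else is a routine passage to the inverse limit using the definitions of vector bundle and line bundle on an ind-variety recalled in Section~1.
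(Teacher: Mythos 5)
Your proposal is correct and follows essentially the same route as the paper: the paper obtains Theorem \ref{podkrutka} directly from Theorem \ref{ChainOfSubbandels}, the uniqueness of the flags (Remark 1, i.e.\ independence of $m$) and the linearity of $\mathbf X$, which is exactly the compatibility-plus-inverse-limit argument you spell out. Your write-up merely makes explicit the gluing step $\phi_{m'}^*F_i^{(m'+1)}\cong F_i^{(m')}$ and the identification of the twist as $\mathcal O_{\mathbf X}(a_i)$ via Corollary \ref{F_i/F_i-1=r_iO(a_i)}, which the paper leaves implicit.
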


\section{A criterion of triviality of  bundles trivial on lines}

The goal of this section is to prove Theorem
\ref{linear_trivialiti} which gives a criterion for 
a  bundle trivial on lines to be trivial.

Let $X$ be a normal projective variety and $E$ a be vector bundle on $X$.
Let $Y$ be the Fano scheme of lines  on $X$.
Let $Z \subset X\times Y$ be the universal line.
Denote the projections to $Y$ and to $X$ by $\pi$ and $p$ respectively.
Note that $\pi:Z \to Y$ is a $\mathbb{P}^1$-bundle.
Consider the scheme

\begin{equation*}
Z_1 = Z \times_Y Z.
\end{equation*}

It parameterizes lines on $X$ with pair of points on them.
Let $p_1,p_2:Z_1 \to X$ be the compositions of the projections $p_{r_1},p_{r_2}:Z_1 \to Z$ with the map $p:Z \to X$.
Further, we define inductively the variety

\begin{equation*}
Z_{n+1} = Z_n \times_X Z_1,
\end{equation*}
with projections $p_1:Z_1 \to X$ и $p_{n+2}:Z_{n+1} \to X$, using the following diagram:


\begin{equation*}
\xymatrix{
&& Z_{n+1} \ar[dl] \ar[dr] \ar@/_2pc/[ddll]_{p_1} \ar@/^2pc/[ddrr]^{p_{n+2}} \\
& Z_n \ar[dl]_{p_1} \ar[dr]^{p_{n+1}} && Z_1 \ar[dl]_{p_1} \ar[dr]^{p_2} \\
X && X && X
}
\end{equation*}

Finally, for a point $x \in X$ we define

\begin{equation*}
Z_n(x) := p_1^{-1}(x).
\end{equation*}

The projection $Z_n(x) \to X$ induced by the projection $p_{n+1}:Z_n \to X$ will be denoted by $f_{x,n}$.

\begin{lemma} \label{lemma_trivialnost}
Let $E$ be a vector bundle on $X$ trivial on lines. Then on $Z_1$ we have an isomorphism $p_1^*E \cong p_2^*E$. 
\end{lemma}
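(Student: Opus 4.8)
The plan is to produce the isomorphism $p_1^*E \cong p_2^*E$ on $Z_1$ by first constructing it "over each line" and then checking that these local isomorphisms glue. Recall that $\pi : Z \to Y$ is a $\mathbb{P}^1$-bundle and $Z_1 = Z \times_Y Z$; a point of $Z_1$ is a triple $(\ell, x_1, x_2)$ with $x_1, x_2 \in \ell$, and $p_i(\ell, x_1, x_2) = x_i$. Write $q = \pi \circ p_{r_1} = \pi \circ p_{r_2} : Z_1 \to Y$ for the structure map. The key point is that $Z_1 \to Y$ is itself a $\mathbb{P}^1 \times \mathbb{P}^1$-bundle, and over each fibre $\ell \in Y$ the two maps $p_1, p_2$ are the two projections of $\ell \times \ell$ onto $\ell \cong \mathbb{P}^1$.

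First I would consider the sheaf $\mathcal{H} := q_* \mathcal{H}om(p_1^*E, p_2^*E)$ on $Y$. Over a point $\ell \in Y$, the fibre computes $H^0(\ell \times \ell,\ \mathrm{pr}_1^*(E|_\ell)^\vee \otimes \mathrm{pr}_2^*(E|_\ell))$; since $E$ is trivial on lines, $E|_\ell \cong \mathcal{O}_{\mathbb{P}^1}^{\oplus r}$, so this space is $H^0(\mathbb{P}^1 \times \mathbb{P}^1, \mathcal{O})^{\oplus r^2} = \mathrm{Mat}_{r\times r}(\mathbb{C})$, of constant dimension $r^2$. By cohomology and base change (Grauert's theorem, as $Y$ is reduced — indeed we may take it normal as in the setup, or at least restrict to the smooth/reduced locus and extend by normality), $\mathcal{H}$ is locally free of rank $r^2$ and its formation commutes with base change; similarly $q_*\mathcal{H}om(p_1^*E, p_1^*E)$ is locally free of rank $r^2$, being $p^*\!\bigl(\pi_*\mathcal{H}om(E,E)\text{-data}\bigr)$ pulled back appropriately — more directly, $q_*\mathcal{E}nd(p_1^*E)$ contains the canonical section $\mathrm{id}$, trivializing a rank-one subsheaf. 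The next step is to produce a global section of $\mathcal{H}$ that is fibrewise an isomorphism. Here I would use that $p_1 : Z_1 \to X$ and, after restricting, the structure over $Y$ let us compare $p_1^*E$ and $p_2^*E$ with the pullback along the diagonal: the diagonal section $\delta : Z \to Z_1$ (sending $(\ell,x)$ to $(\ell,x,x)$) satisfies $p_1 \circ \delta = p_2 \circ \delta = p$, so $\delta^* p_1^*E = \delta^* p_2^*E = p^*E$ canonically. The identity of $p^*E$ thus gives a section of $\delta^*\mathcal{H}om(p_1^*E,p_2^*E)$; I want to spread it out. The cleanest route: both $p_1^*E$ and $p_2^*E$, restricted to a fibre $\ell \times \ell$ of $q$, are pulled back from the \emph{second} factor in the following sense — on $\mathbb{P}^1 \times \mathbb{P}^1$ a bundle trivial on all "horizontal" and "vertical" lines and trivial on one line is globally trivial, hence $(p_i^*E)|_{\ell\times\ell} \cong \mathcal{O}^{\oplus r}$. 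So on each fibre there is an isomorphism, unique up to $GL_r(\mathbb{C})$; the sheaf $\underline{\mathrm{Isom}}(p_1^*E, p_2^*E)$ over $Y$ is a $GL_r$-torsor, and I must show it is trivial.

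Triviality of that torsor is what the diagonal gives me: the section $\delta$ provides, over all of $Y$, a trivialization, because along $\delta(Z)$ we have the canonical identification $p_1^*E|_{\delta} = p^*E = p_2^*E|_\delta$, and $\delta$ meets every fibre of $q$. Concretely: let $\mathcal{H}^\circ := q_*\mathcal{H}om(p_1^*E,p_2^*E)$ and $\mathcal{H}^{\circ\circ} := q_*\mathcal{E}nd(p_2^*E)$, both locally free of rank $r^2$ with base-change. Restriction along the diagonal $\delta$ together with the canonical identifications defines a morphism of locally free sheaves $\mathcal{H}^\circ \to \delta^*p_2^*\mathcal{E}nd(E) = p^*\mathcal{E}nd(E)$ compatible over $Y$, and I claim evaluation-at-the-diagonal $\mathcal{H}^\circ \to \mathcal{H}^{\circ\circ}$ is an isomorphism: on each fibre $\ell$ it is the map $\mathrm{Mat}_{r\times r}(\mathbb{C}) \to \mathrm{Mat}_{r\times r}(\mathbb{C})$, $\phi \mapsto (\text{value of }\phi\text{ at a point of }\ell\text{, read through the fixed trivialization})$, which is bijective since a morphism of trivial bundles on $\mathbb{P}^1$ is constant. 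So $\mathcal{H}^\circ \cong \mathcal{H}^{\circ\circ}$, and under this isomorphism the canonical section $\mathrm{id} \in H^0(Y,\mathcal{H}^{\circ\circ})$ corresponds to a global section $\sigma \in H^0(Y,\mathcal{H}^\circ) = H^0(Z_1, \mathcal{H}om(p_1^*E,p_2^*E))$ which is an isomorphism on every fibre of $q$, hence (being a morphism of bundles that is fibrewise an isomorphism over a variety that dominates $Z_1$) an isomorphism of sheaves on $Z_1$. This $\sigma$ is the desired isomorphism $p_1^*E \xrightarrow{\sim} p_2^*E$.

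The main obstacle is the base-change/Grauert argument: one needs $Y$ (or the relevant locus) to be reduced so that constancy of $h^0$ along the fibres of $q$ upgrades to local freeness of $q_*\mathcal{H}om(p_1^*E,p_2^*E)$ with base change. Since the Fano scheme of lines may be non-reduced or reducible, I would either invoke that $X$ is normal and work over the reduced Fano scheme (which suffices, as the universal line and all $Z_n$ were defined as reduced schemes in the setup), or argue fibrewise and glue by hand using that the transition data is forced to be the constant matrices coming from $\sigma$. A secondary point to check is that $\ell \times \ell \hookrightarrow Z_1$ really is a fibre of $q$ with $p_1,p_2$ the two projections — this is immediate from $Z_1 = Z\times_Y Z$ and $\pi$ being a $\mathbb{P}^1$-bundle — and that "trivial on lines" for $E$ on $X$ indeed forces $(p_i^*E)|_{\ell\times\ell}$ trivial, which follows because $p_i$ collapses $\ell\times\ell$ onto the line $\ell$ on which $E$ is trivial.
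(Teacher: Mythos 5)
Your argument is essentially correct, but it takes a genuinely more roundabout route than the paper. The paper's proof descends $E$ itself: since $E$ is trivial on lines, $p^*E$ is trivial on every fibre of the $\mathbb{P}^1$-bundle $\pi:Z\to Y$, hence $p^*E\cong\pi^*F$ for some bundle $F$ on $Y$; then, because $\pi\circ q_1=\pi\circ q_2$ on $Z_1=Z\times_Y Z$, one gets $p_1^*E=q_1^*p^*E\cong q_1^*\pi^*F\cong q_2^*\pi^*F\cong q_2^*p^*E=p_2^*E$ in one line. You instead work directly on $Z_1$, pushing $\mathcal{H}om(p_1^*E,p_2^*E)$ down to $Y$, showing by Grauert that it is locally free of rank $r^2$ with base change, and selecting the global section that restricts to the identity along the diagonal $\delta:Z\to Z_1$; constancy of sections of a trivial bundle on each fibre $\ell\times\ell$ then forces that section to be an isomorphism at every point. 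In substance this reproves, in Hom-form, exactly the descent statement the paper invokes (the cohomology-and-base-change input is the same), so what your version buys is a canonical isomorphism normalized to be the identity on the diagonal --- pleasant but not needed here --- at the cost of length. Two small repairs: your ``evaluation at the diagonal'' should land in $\pi_*\mathcal{E}nd(p^*E)$ rather than $q_*\mathcal{E}nd(p_2^*E)$ (or be composed with the restriction map between the two, which is fibrewise an isomorphism); and the reducedness caveat you raise for Grauert is real but applies equally to the paper's descent step, and is resolved as you suggest by taking $Y$ (hence $Z$, $Z_1$) with its reduced structure, consistent with how the paper treats the bases of families of lines elsewhere.
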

\begin{proof}[\textbf{Proof.}]
First consider the vector bundle $p^*E$ on $Z$. Since $E$ is trivial on lines, it is trivial on all fibres of $\pi:Z \to Y$.
Since the latter is a $\mathbb{P}^1$-bundle, it follows that $p^*E \cong \pi^*F$ for some vector bundle $F$ on $Y$. Now consider the diagram
\begin{equation*}
\xymatrix{
&& Z_1 \ar[dl]_{q_1} \ar[dr]^{q_2} \ar@/_2pc/[ddll]_{p_1} \ar@/^2pc/[ddrr]^{p_{2}} \\
& Z \ar[dl]_p \ar[dr]^\pi && Z \ar[dl]_\pi \ar[dr]^p \\
X && Y && X
}
\end{equation*}
We have
\begin{equation*}
p_1^*E = q_1^*p^*E \cong q_1^*\pi^*F \cong q_2^*\pi^*F \cong q_2^*p^*E \cong p_2^*E,
\end{equation*}
which proves the Lemma.
\end{proof}

The next step is the following

\begin{lemma}
If $E$ is trivial on lines then for each $n > 0$ the bundle $f_{x,n}^*E$ on $Z_n(x)$ is trivial.
\end{lemma}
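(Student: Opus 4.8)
The statement is proved by induction on $n$. The base case $n=1$: recall that $Z_1(x) = p_1^{-1}(x)$ consists of lines through $x$ together with a second point on each such line, and $f_{x,1}\colon Z_1(x)\to X$ is the restriction of $p_2$. By Lemma~\ref{lemma_trivialnost} we have $p_1^*E\cong p_2^*E$ on $Z_1$, so $f_{x,1}^*E = (p_2^*E)|_{Z_1(x)} \cong (p_1^*E)|_{Z_1(x)}$; but $p_1$ is constant on $Z_1(x)$ (it sends everything to $x$), so $(p_1^*E)|_{Z_1(x)}$ is the constant bundle with fibre $E_x$, i.e. trivial. Hence $f_{x,1}^*E$ is trivial.

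\textbf{Inductive step.} Assume $f_{x,n}^*E$ is trivial on $Z_n(x)$ for all $x$; we must show $f_{x,n+1}^*E$ is trivial on $Z_{n+1}(x)$. By definition $Z_{n+1} = Z_n\times_X Z_1$, where the fibre product is formed along $p_{n+1}\colon Z_n\to X$ and $p_1\colon Z_1\to X$; then $Z_{n+1}(x) = p_1^{-1}(x)$ (here $p_1$ is the projection all the way back to the first factor $X$), and $f_{x,n+1}$ is induced by $p_{n+2}\colon Z_{n+1}\to X$. The natural projection $Z_{n+1}(x)\to Z_n(x)$ (forgetting the last $Z_1$-coordinate) is a fibration whose fibre over a point $z\in Z_n(x)$ with $p_{n+1}(z) = y\in X$ is exactly $Z_1(y)$. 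Over that fibre, $f_{x,n+1}^*E$ restricts to $f_{y,1}^*E$, which is trivial by the base case. Meanwhile, pulling back along the section of $Z_{n+1}(x)\to Z_n(x)$ that repeats the point $y$ (i.e. takes $z\mapsto$ the trivial second point) recovers $f_{x,n}^*E$, which is trivial by the inductive hypothesis.

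\textbf{Assembling the two pieces.} Write $g\colon Z_{n+1}(x)\to Z_n(x)$ for the projection. What we need is that $f_{x,n+1}^*E$, being trivial on every fibre of $g$ and pulled back from a trivial bundle on the base via a section, is itself trivial. The clean way: on $Z_{n+1} = Z_n\times_X Z_1$ the bundle $p_{n+2}^*E = p_{r_2}^*(p_2^*E)\cong p_{r_2}^*(p_1^*E)$ by Lemma~\ref{lemma_trivialnost} applied to the $Z_1$-factor, and $p_{r_2}^*p_1^*E$ is the pullback of $p_{n+1}^*E$ on $Z_n$; restricting to $Z_{n+1}(x)$ this says $f_{x,n+1}^*E\cong g^*\big(f_{x,n}^*E\big)$. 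Since $f_{x,n}^*E$ is trivial by induction, so is its pullback $g^*f_{x,n}^*E$, hence $f_{x,n+1}^*E$ is trivial. This completes the induction.

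\textbf{Main obstacle.} The only delicate point is to identify the bundle $p_{n+2}^*E$ on $Z_{n+1}=Z_n\times_X Z_1$ correctly: one must check that the isomorphism $p_1^*E\cong p_2^*E$ of Lemma~\ref{lemma_trivialnost} is being applied over the second factor $Z_1$ with its two maps $p_1,p_2$ to $X$, so that after pullback along $p_{r_2}\colon Z_{n+1}\to Z_1$ we can replace $p_{n+2}^*E$ (which factors through $p_2$ on the $Z_1$-factor) by a bundle factoring through $p_1$ on that factor, i.e. by $g^*(p_{n+1}^*E|_{Z_n})$. Keeping track of which copy of $X$ each projection lands in is the whole content; once that bookkeeping is set up the triviality is immediate from the inductive hypothesis.
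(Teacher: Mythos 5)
Your ``clean way'' is exactly the paper's argument: apply Lemma~\ref{lemma_trivialnost} on the $Z_1$-factor of $Z_{n+1}=Z_n\times_X Z_1$ to replace $p_{n+2}^*E$ by the pullback of $p_{n+1}^*E$ from the $Z_n$-factor, restrict to $Z_{n+1}(x)$ to get $f_{x,n+1}^*E\cong g^*f_{x,n}^*E$, and conclude by induction, with the same base case $n=1$. The intermediate heuristic about fibres $Z_1(y)$ and a section is superfluous (and would not suffice on its own), but the argument you actually run is correct and essentially identical to the paper's.
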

\begin{proof}[\textbf{Proof.}]
We argue by induction on $n$. For $n = 1$ we have
\begin{equation*}
f_{x,1}^*E = (p_2^*E)|_{Z_1(x)} \cong (p_1^*E)|_{Z_1(x)} \cong E_x\otimes \mathcal{O}_{Z_1(x)}
\end{equation*}
since the composition $Z_1(x) \subset Z_1 \xrightarrow{p_1} X$ factors through the point $x$.
This justifies the base of the induction.

Now assume that the claim is true for some $n$. Then for $n+1$ consider the diagram
\begin{equation*}
\xymatrix{
&& Z_{n+1}(x) \ar[dl]^{q_n} \ar[dr]_{q_{n+1}} \ar@/^2pc/[ddrr]^{f_{x,n+1}} \\
& Z_n(x) \ar[dr]_{f_{x,n}} && Z_1 \ar[dl]^{p_1} \ar[dr]_{p_2} \\
&& X && X
}
\end{equation*}
We have
\begin{equation*}
f_{x,n+1}^*E = q_{n+1}^*p_2^*E \cong q_{n+1}^*p_1^*E \cong q_n^*f_{x,n}^*E \cong q_n^*\mathcal{O}_{Z_n(x)}^{\oplus r} \cong \mathcal{O}_{Z_{n+1}(x)}^{\oplus r},
\end{equation*}
which proves the Lemma.
\end{proof}

Now we can finish by the following argument

\begin{theorem} \label{linear_trivialiti}
Assume that $X$ is normal and for some $n > 0$ and some point $x \in X$ the map $f_{x,n}:Z_n(x) \to X$ is dominant and has connected fibers.
Then any vector bundle on $X$ trivial on all lines is trivial.
\end{theorem}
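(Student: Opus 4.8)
The plan is to use the two preceding lemmas to produce a bundle $F$ on $X$ together with an isomorphism $p^*E \cong \pi^*F$ on the universal line $Z$, and then descend an isomorphism $E \cong E_x \otimes \mathcal{O}_X$ along the dominant map $f_{x,n}$. More precisely, fix the point $x$ for which $f_{x,n}\colon Z_n(x) \to X$ is dominant with connected fibres. By the second lemma, $f_{x,n}^*E$ is trivial on $Z_n(x)$, say $f_{x,n}^*E \cong \mathcal{O}_{Z_n(x)}^{\oplus r}$, and in fact the proof of that lemma identifies this trivialisation canonically with $E_x \otimes \mathcal{O}_{Z_n(x)}$ via the chain of isomorphisms coming from Lemma \ref{lemma_trivialnost}. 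The goal is to show this descends: that there is an isomorphism $E \cong E_x \otimes \mathcal{O}_X$ on $X$ whose pullback along $f_{x,n}$ is the given one.

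First I would analyse $\mathcal{H}om(E, E_x \otimes \mathcal{O}_X) = E^\vee \otimes E_x$ as a vector bundle on $X$; the desired isomorphism is a global section of this bundle that is nowhere degenerate. Its pullback to $Z_n(x)$ is $f_{x,n}^*(E^\vee) \otimes E_x \cong \mathcal{O}_{Z_n(x)}^{\oplus r^2}$, which is trivial, and the trivialising section we already have is the pullback of $\mathrm{id}$ under the canonical identification. The key point is that since $f_{x,n}$ is dominant with connected fibres and $X$ is normal, the natural map $H^0(X, E^\vee \otimes E_x) \to H^0(Z_n(x), f_{x,n}^*(E^\vee \otimes E_x))$ is an isomorphism: indeed $(f_{x,n})_* \mathcal{O}_{Z_n(x)} = \mathcal{O}_X$ because $X$ is normal and the fibres are connected (and $f_{x,n}$ is dominant, hence surjective as a proper map, or one passes to the Stein factorisation which is trivial here), so $(f_{x,n})_*(f_{x,n}^* \mathcal G) = \mathcal G$ for any locally free $\mathcal G$ by the projection formula, and taking global sections gives the claimed bijection. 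Hence the canonical trivialising section of $f_{x,n}^*(E^\vee \otimes E_x)$ comes from a unique section $\sigma \in H^0(X, E^\vee \otimes E_x)$, i.e.\ a homomorphism $\sigma\colon E \to E_x \otimes \mathcal{O}_X$.

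It remains to check $\sigma$ is an isomorphism. The locus in $X$ where $\sigma$ fails to be an isomorphism is closed; its preimage under $f_{x,n}$ is the locus where $f_{x,n}^*\sigma$ fails to be an isomorphism, which is empty since $f_{x,n}^*\sigma$ is the chosen trivialisation. Since $f_{x,n}$ is surjective, the degeneracy locus of $\sigma$ is empty, so $\sigma$ is an isomorphism of vector bundles and $E \cong E_x \otimes \mathcal{O}_X \cong \mathcal{O}_X^{\oplus r}$ is trivial.

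The main obstacle is justifying $(f_{x,n})_*\mathcal{O}_{Z_n(x)} = \mathcal{O}_X$ cleanly from the hypotheses "dominant with connected fibres" plus normality of $X$: one has to be a little careful that $f_{x,n}$ is proper (it is, since $Z_n(x)$ is projective, being a closed subscheme of a product of projective varieties) and that connectedness of the scheme-theoretic fibres together with normality of the base forces the Stein factorisation to be an isomorphism; equivalently one invokes a Zariski-type connectedness theorem. Once that push-forward identity is in hand, the descent of the section and the emptiness of the degeneracy locus are formal, so this is really the only nontrivial point.
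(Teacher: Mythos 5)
Your proof is correct and rests on exactly the same key point as the paper's: the identity $(f_{x,n})_*\mathcal{O}_{Z_n(x)} \cong \mathcal{O}_X$ (from normality of $X$ together with dominance and connectedness of fibres) combined with the projection formula and the triviality of $f_{x,n}^*E$ supplied by the preceding lemma. The paper is slightly more direct --- it computes $(f_{x,n})_*f_{x,n}^*E$ in two ways, getting $E$ on one hand and $\mathcal{O}_X^{\oplus r}$ on the other, and compares --- whereas you descend the trivialising section of $E^\vee\otimes E_x$ and check that its degeneracy locus is empty; this is a harmless elaboration of the same argument.
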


\begin{proof}[\textbf{Proof.}]
Assume that $f_{x,n}$ is dominant and has connected fibers.
Then $(f_{x,n})_*\mathcal{O}_{Z_n(x)} \cong \mathcal{O}_X$ since $X$ is normal.
Hence, by projection formula we have
\begin{equation*}
(f_{x,n})_*f_{x,n}^*E \cong E\otimes (f_{x,n})_*\mathcal{O}_{Z_n(x)} \cong E\otimes\mathcal{O}_X \cong E.
\end{equation*}

Finally, by Lemma \ref{lemma_trivialnost} we have $f_{n,x}^*E \cong \mathcal{O}_{Z_n(x)}^{\oplus r}$, hence
\begin{equation*}
(f_{x,n})_*f_{x,n}^*E \cong
(f_{x,n})_*\mathcal{O}_{Z_n(x)}^{\oplus r} \cong \mathcal{O}_X^{\oplus r}.
\end{equation*}
Comparing these two equalities we see that $E$ is trivial.

\end{proof}


\section{Splitting of the bundle $\mathbf{E}$}

To finish the proof of splitting of the  bundle
$\mathbf{E}$ on the variety $\mathbf{X}$ we apply 
Kodaira vanishing theorem \cite{GH}.

\begin{theorem} Let  $X$ be a smooth projective variety and let
$L$ be an ample line bundle on it. Then for any 
$q>0$  we have $H^q(X,K_X\otimes L)=0$.
\end{theorem}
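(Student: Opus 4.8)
The plan is to give the classical Hodge-theoretic proof via the Bochner--Kodaira--Nakano identity. Write $n=\dim_{\mathbb C}X$ and note that, being smooth projective over $\mathbb C$, $X$ is a compact Kähler manifold; by the Dolbeault isomorphism $H^q(X,K_X\otimes L)\cong H^{n,q}_{\bar\partial}(X,L)$, the Dolbeault cohomology of $L$-valued $(n,q)$-forms. Since $L$ is ample, by the easy direction of the Kodaira embedding criterion it carries a Hermitian metric $h$ whose Chern curvature form $\Theta_h(L)$ is positive; fix such an $h$ and equip $X$ with the Kähler metric $\omega:=\tfrac{i}{2\pi}\Theta_h(L)$. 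By Hodge theory on the compact Kähler manifold $X$, every class in $H^{n,q}_{\bar\partial}(X,L)$ is represented by a unique $L$-valued harmonic form $\alpha$, i.e. $\Box_{\bar\partial}\alpha=0$ where $\Box_{\bar\partial}=\bar\partial\bar\partial^{\,*}+\bar\partial^{\,*}\bar\partial$.

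First I would establish the Bochner--Kodaira--Nakano identity $\Box_{\bar\partial}=\Box_{\partial_h}+[\,i\Theta_h(L),\Lambda\,]$, where $\Lambda$ is the adjoint of the Lefschetz operator $L_\omega=\omega\wedge(\cdot)$ and $\partial_h$ is the $(1,0)$-part of the Chern connection of $(L,h)$; this follows by combining the Kähler (Hodge) commutation identities $[\Lambda,\bar\partial]=-i\partial_h^{\,*}$ and $[\Lambda,\partial_h]=i\bar\partial^{\,*}$ with the curvature relation $\partial_h\bar\partial+\bar\partial\partial_h=\Theta_h(L)$. Next I would compute the curvature term on $(n,q)$-forms: from the $\mathfrak{sl}_2$-relation $[L_\omega,\Lambda]=(p+q-n)\,\mathrm{Id}$ on $(p,q)$-forms together with the choice $\omega=i\Theta_h(L)$, one gets $[\,i\Theta_h(L),\Lambda\,]\alpha=q\,\alpha$ for any $L$-valued $(n,q)$-form $\alpha$.

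With these in hand the conclusion is immediate: pairing $\Box_{\bar\partial}\alpha=0$ with $\alpha$ and integrating over $X$ gives
\[
0=(\Box_{\partial_h}\alpha,\alpha)+q\|\alpha\|^2=\|\partial_h\alpha\|^2+\|\partial_h^{\,*}\alpha\|^2+q\|\alpha\|^2\ \ge\ q\|\alpha\|^2 ,
\]
so for $q>0$ necessarily $\alpha=0$. Hence $H^{n,q}_{\bar\partial}(X,L)=0$, and therefore $H^q(X,K_X\otimes L)=0$.

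I expect the main obstacle to be the careful derivation and bookkeeping of the Bochner--Kodaira--Nakano identity and the Hodge commutation relations for a line-bundle-valued Laplacian — in particular tracking the Chern connection and its $(1,0)$ and $(0,1)$ parts, and fixing the sign and normalization conventions so that the curvature term comes out as the positive operator $q\cdot\mathrm{Id}$ on $(n,q)$-forms. A secondary point to dispatch is the existence of a positively curved Hermitian metric on an ample line bundle. One could instead bypass Hodge theory and invoke the algebraic proof of Deligne--Illusie, reducing modulo $p$ and using the Frobenius decomposition of the de Rham complex together with Serre duality and degeneration of the Hodge--de Rham spectral sequence; but in the analytic setting of this paper the Bochner--Kodaira--Nakano route is the most direct.
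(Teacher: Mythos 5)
This statement is the classical Kodaira vanishing theorem, which the paper does not prove at all: it is recalled as a standard fact with a citation to Griffiths--Harris, and your Bochner--Kodaira--Nakano argument is exactly the proof given in that reference, so your proposal is correct and matches the intended source. The only nit is a harmless normalization slip: you set $\omega=\tfrac{i}{2\pi}\Theta_h(L)$ but later compute the curvature operator as if $\omega=i\Theta_h(L)$; with your normalization the commutator acts as $2\pi q\,\mathrm{Id}$ rather than $q\,\mathrm{Id}$ on $(n,q)$-forms, which is still positive for $q>0$, so the vanishing argument goes through unchanged.
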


Recall the following standard fact.
\begin{theorem} Let $X$ be a complex projective variety
and let $E$ be a vector bundle on $X$ with a subbundle $F$. Suppose that $H^1((E/F)^*\otimes F)=0$
then  $E\cong F\oplus E/F$.
\end{theorem}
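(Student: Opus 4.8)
The plan is to realize $E$ as an extension of $E/F$ by $F$ and to show that the hypothesis forces the extension class to vanish. Write $Q=E/F$ and consider the short exact sequence of vector bundles
$$0 \to F \to E \to Q \to 0.$$
Up to isomorphism of extensions such a sequence is classified by an element $e(E)\in\mathrm{Ext}^1_{\mathcal{O}_X}(Q,F)$, and it splits, i.e.\ admits an $\mathcal{O}_X$-linear section $Q\to E$, if and only if $e(E)=0$. So it suffices to prove that $\mathrm{Ext}^1_{\mathcal{O}_X}(Q,F)=0$ under the assumption $H^1(X,Q^*\otimes F)=0$.

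The next step is to pass from global $\mathrm{Ext}$ to ordinary sheaf cohomology by means of the local-to-global spectral sequence
$$E_2^{p,q}=H^p\bigl(X,\mathcal{E}xt^q_{\mathcal{O}_X}(Q,F)\bigr)\ \Longrightarrow\ \mathrm{Ext}^{p+q}_{\mathcal{O}_X}(Q,F).$$
Since $Q=E/F$ is locally free of finite rank, one has $\mathcal{E}xt^q_{\mathcal{O}_X}(Q,F)=0$ for $q>0$ and $\mathcal{H}om_{\mathcal{O}_X}(Q,F)=Q^*\otimes F$; hence the spectral sequence degenerates and yields $\mathrm{Ext}^1_{\mathcal{O}_X}(Q,F)\cong H^1(X,Q^*\otimes F)$, which is zero by hypothesis. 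Therefore $e(E)=0$, the sequence splits, and $E\cong F\oplus Q=F\oplus E/F$.

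There is no genuine obstacle here; the only point needing a little care is the identification $\mathrm{Ext}^1_{\mathcal{O}_X}(Q,F)\cong H^1(X,Q^*\otimes F)$, which holds on an arbitrary projective variety precisely because $E/F$ is locally free, so no smoothness of $X$ is required. One may also bypass the spectral sequence entirely: twist the given sequence by $Q^*$ and take cohomology; the connecting map $H^0(X,Q^*\otimes Q)\to H^1(X,Q^*\otimes F)$ sends $\mathrm{id}_Q$ to $e(E)$, and since its target vanishes, $\mathrm{id}_Q$ lifts to a homomorphism $Q\to E$ that splits the sequence, giving the desired decomposition.
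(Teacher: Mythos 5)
Your proof is correct: the identification $\mathrm{Ext}^1_{\mathcal{O}_X}(E/F,F)\cong H^1(X,(E/F)^*\otimes F)$ (valid because $E/F$ is locally free, with no smoothness of $X$ needed), together with either the vanishing of the extension class or the lifting of $\mathrm{id}_{E/F}$ through the connecting homomorphism, is exactly the standard argument. The paper states this result as a recalled standard fact and gives no proof, so your write-up simply supplies the expected argument; there is nothing to reconcile.
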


Recall as well the formula for the canonical class of the Grassmannian
$G(k,n)$: $$K_{G(k,n)}=\mathcal{O}_{G(k,n)}(-n).$$

Recall the adjunction formula
\begin{theorem} Let $X$ be a smooth variety that is a 
complete intersection of  $G(k,n)$ with a finite 
collection of smooth hypersurfaces 
$Y_1, \ldots , Y_l$ of degrees $d_1,...,d_l$ correspondingly,
and let $d=\sum_{i=1}^k d_l$.
Then we have $K_{X}=K_{G(k,n)}\otimes \mathcal{O}(d_1+...+d_l)=\mathcal{O}(d-n)$.
\end{theorem}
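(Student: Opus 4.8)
The plan is to deduce this from the adjunction formula for a smooth subvariety of a smooth ambient variety, together with the description of the (co)normal bundle of a complete intersection. First I would record the hypothesis carefully: $X = G(k,n)\cap Y_1\cap\cdots\cap Y_l$ with $\mathrm{codim}_{G(k,n)}X = l$, so that the $l$ equations $f_1,\dots,f_l$ — where $f_i$ is the restriction to $G(k,n)$ of a defining form of $Y_i$, hence a section of $\mathcal{O}_{G(k,n)}(d_i)$ — form a regular sequence in the local rings of $G(k,n)$ along $X$. Consequently $X$ is a local complete intersection in $G(k,n)$ and its conormal sheaf is $\mathcal{N}^\vee_{X/G(k,n)} = \mathcal{I}_X/\mathcal{I}_X^2 \cong \bigoplus_{i=1}^l \mathcal{O}_X(-d_i)$, where $\mathcal{I}_X$ is the ideal sheaf of $X$ in $G(k,n)$ and $\mathcal{O}_X(1)$ is the restriction of the Pl\"ucker polarization; dualizing gives $\mathcal{N}_{X/G(k,n)} \cong \bigoplus_{i=1}^l \mathcal{O}_X(d_i)$.

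Next I would invoke the adjunction formula: since $X$ and $G(k,n)$ are both smooth, $K_X \cong K_{G(k,n)}|_X \otimes \det \mathcal{N}_{X/G(k,n)}$. (One can alternatively iterate the hypersurface adjunction $K_{W\cap Y} = K_W|_{W\cap Y}\otimes \mathcal{O}(\deg Y)$, but this requires knowing in advance that each partial intersection $G(k,n)\cap Y_1\cap\cdots\cap Y_j$ is smooth; the normal-bundle version avoids that and uses only the smoothness of $X$ itself, which is assumed.) Combining with the previous step, $K_X \cong K_{G(k,n)}|_X \otimes \mathcal{O}_X(d_1+\cdots+d_l)$.

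Finally I would substitute the known canonical class $K_{G(k,n)} = \mathcal{O}_{G(k,n)}(-n)$ recalled just above, restrict it to $X$, and combine: $K_X \cong \mathcal{O}_X(-n)\otimes\mathcal{O}_X(d_1+\cdots+d_l) = \mathcal{O}_X(d-n)$, where $d := \sum_{i=1}^l d_i$. This is exactly the asserted equality $K_X = K_{G(k,n)}\otimes\mathcal{O}(d_1+\dots+d_l) = \mathcal{O}(d-n)$.

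I expect the only point requiring care to be the claim that the defining forms cut out a regular sequence, i.e. that "$\mathrm{codim}_{G(k,n)}X = l$" genuinely forces $X$ to be a complete intersection scheme-theoretically: this is where the codimension hypothesis enters, via the standard fact that $l$ elements generating an ideal of codimension $l$ in a regular (in particular Cohen--Macaulay) local ring form a regular sequence, which makes $\mathcal{I}_X/\mathcal{I}_X^2$ locally free of rank $l$ with the stated twists. Once that is in place, the remainder is a purely formal manipulation of line bundles on $X$.
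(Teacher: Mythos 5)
Your argument is correct: the identification of the conormal sheaf $\mathcal{I}_X/\mathcal{I}_X^2\cong\bigoplus_i\mathcal{O}_X(-d_i)$ via the regular-sequence (Cohen--Macaulay) argument, followed by adjunction $K_X\cong K_{G(k,n)}|_X\otimes\det\mathcal{N}_{X/G(k,n)}$ and the formula $K_{G(k,n)}=\mathcal{O}(-n)$, is the standard derivation of the stated formula. Note that the paper gives no proof at all for this statement --- it is explicitly ``recalled'' as a known adjunction fact --- so your write-up simply supplies the justification the paper leaves implicit, and your closing remark correctly isolates the only point needing care, namely that the codimension hypothesis makes the defining sections a regular sequence so that $X$ (understood as the scheme-theoretic intersection, smooth hence reduced) is an honest complete intersection with locally free conormal sheaf of the stated twists.
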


Recall finally that in the case when the number $l$ of hypersurfaces is less than $\dim G(k,n)-2$ (i.e., $\dim X>2$)
by Lefschetz hyperplane theorem (\cite{La}, Theorem 3.1.17)
we have $Pic(X)=Pic(G(k,n))=\mathbb{Z}$.

\begin{corollary} \label{dimL=r>d-n} Consider on $X$ the bundle $L=\mathcal{O}(r)$. If  $r>d-n$ then  $H^1(L)=0$.

$H^1(\mathcal{O}_X(d-n+r))=0$ for all $ r>0$. In particular for $n\gg 1, \ d=2$ we have $H^1(\mathcal{O}_X(a))$ for all $ a>0$.
\end{corollary}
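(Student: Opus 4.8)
The plan is to derive the statement directly from the Kodaira vanishing theorem together with the adjunction formula $K_X=\mathcal{O}_X(d-n)$ recalled just above. The one preliminary observation I would record is that $\mathcal{O}_X(1)$ is ample: it is the restriction of $\mathcal{O}_{\mathbb{P}^{N-1}}(1)$ along the closed embedding $X\hookrightarrow G(k,n)\hookrightarrow\mathbb{P}^{N-1}$, and the restriction of an ample line bundle to a closed subvariety is ample; hence $\mathcal{O}_X(m)$ is ample for every integer $m>0$. Moreover $X$ is smooth, being a complete intersection of the smooth variety $G(k,n)$ with smooth hypersurfaces, so Kodaira vanishing applies to $X$.

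For the first assertion I would write $L=\mathcal{O}_X(r)=\mathcal{O}_X(d-n)\otimes\mathcal{O}_X(r-d+n)=K_X\otimes\mathcal{O}_X(r-d+n)$. The hypothesis $r>d-n$ gives $r-d+n>0$, so $\mathcal{O}_X(r-d+n)$ is ample, and Kodaira vanishing yields $H^q(X,K_X\otimes\mathcal{O}_X(r-d+n))=0$ for all $q>0$; in particular $H^1(X,L)=0$.

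The remaining assertions are the same computation specialized. For $r>0$ one has $\mathcal{O}_X(d-n+r)=K_X\otimes\mathcal{O}_X(r)$ with $\mathcal{O}_X(r)$ ample, so $H^1(X,\mathcal{O}_X(d-n+r))=0$. Finally, when $d=2$ and $n$ is large we have $d-n=2-n<0$, so every $a>0$ satisfies $a>d-n$ and the first assertion applies, giving $H^1(X,\mathcal{O}_X(a))=0$.

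There is essentially no genuine obstacle here: each step is an immediate invocation of results already quoted in this section. The only points that deserve a line of justification are the ampleness of $\mathcal{O}_X(m)$ for $m>0$ and the smoothness of $X$, both of which are built into the standing hypotheses on a complete intersection $X\subset G(k,n)$; with those in hand the corollary is a one-line consequence of Kodaira vanishing and adjunction.
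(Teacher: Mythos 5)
Your proof is correct and follows exactly the route the paper intends: the corollary is stated there without a written proof precisely because it is the immediate combination of the Kodaira vanishing theorem, the adjunction formula $K_X=\mathcal{O}_X(d-n)$, and the ampleness of $\mathcal{O}_X(m)$ for $m>0$, which is what you spell out. No discrepancy with the paper's argument.
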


\begin{theorem} \label{E-rassheplenie v prjamuju summu} Let  $\mathbf{X}$ be an ind-variety and $\mathbf{E}$ be a vector bundle on it. Let $0=\mathbf{F_0}\subset \mathbf{F_1}\subset ...\subset \mathbf{F_s}=\mathbf{E}$ be a flag of subbundles such that each bundle $\mathbf{F_{i}}/\mathbf{F_{i-1}}$ is a twist of a  
 bundle trivial on lines by a line bundle. Then $\mathbf{E}=\oplus_i\mathbf{F_{i}}/\mathbf{F_{i-1}}$.
\end{theorem}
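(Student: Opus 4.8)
The plan is to prove the statement by descending induction on the length $s$ of the flag, peeling off the bottom subbundle $\mathbf{F_1}$ first and then applying the inductive hypothesis to the quotient $\mathbf{E}/\mathbf{F_1}$, which carries the induced flag $0\subset \mathbf{F_2}/\mathbf{F_1}\subset\cdots\subset \mathbf{F_s}/\mathbf{F_1}$ of length $s-1$ with the same kind of quotients. For the base of the induction and for the peeling step it suffices to show that the extension
\begin{equation*}
0\longrightarrow \mathbf{F_1}\longrightarrow \mathbf{E}\longrightarrow \mathbf{E}/\mathbf{F_1}\longrightarrow 0
\end{equation*}
splits, i.e. that the class of this extension in $\mathrm{Ext}^1(\mathbf{E}/\mathbf{F_1},\mathbf{F_1})$ vanishes. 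By the standard fact recalled just above (the theorem on $H^1((E/F)^*\otimes F)=0$), it is enough to prove that $H^1\bigl((\mathbf{E}/\mathbf{F_1})^*\otimes\mathbf{F_1}\bigr)=0$ on $\mathbf{X}$.

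Next I would reduce this cohomology vanishing on the ind-variety $\mathbf{X}=\underrightarrow{\lim}X_m$ to a vanishing statement on the finite-dimensional varieties $X_m$: since $\mathbf{X}$ is an ind-variety, $H^1(\mathbf{X},\mathbf{G})=\underleftarrow{\lim}H^1(X_m,G_m)$ for a bundle $\mathbf{G}=\underleftarrow{\lim}G_m$ (or at least it suffices that $H^1(X_m,G_m)=0$ for all large $m$ to conclude that every extension class dies, hence the extension splits compatibly). Now $\mathbf{F_1}$, being a twist of a bundle trivial on lines by a line bundle, is by Theorem \ref{linear_trivialiti} (applied on each $X_m$, whose hypotheses hold by Corollary \ref{svjaznost' dlja G(k,n)} for $m$ large, giving dominance and connected fibers of $f_{x,n}$) of the form $\mathcal{O}_{X_m}(a_1)^{\oplus r_1}$; likewise each graded piece of the flag on $\mathbf{E}/\mathbf{F_1}$ is of the form $\mathcal{O}_{X_m}(b)^{\oplus(\dim)}$, so by dévissage along the induced flag it is enough to show $H^1(X_m,\mathcal{O}_{X_m}(c))=0$ for the finitely many relevant twists $c=b-a_1$, once $m$ (hence $n_m$) is large. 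Here the splitting type numbers $a_i$ are fixed independently of $m$ by uniformity (Theorem \ref{uniform}) and by the remark that the flag does not depend on $m$, so the set of twists $c$ that occur is finite and fixed.

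The cohomology input is then exactly Corollary \ref{dimL=r>d-n}: by the adjunction formula $K_{X_m}=\mathcal{O}_{X_m}(d-n_m)$ with $d=\sum d_i$ fixed, and since $n_m\to\infty$, for $m$ large we have $d-n_m+c<0$ for every relevant $c$; writing $\mathcal{O}_{X_m}(c)=K_{X_m}\otimes\mathcal{O}_{X_m}(c-d+n_m)$ with $\mathcal{O}_{X_m}(c-d+n_m)$ ample, Kodaira vanishing gives $H^1(X_m,\mathcal{O}_{X_m}(c))=0$. (For the case $c>0$ one also needs the sign to work; the point is that $n_m$ grows without bound while $d$ and the finite collection of twists $c$ stay fixed, so eventually $c-d+n_m>0$ and $c<n_m-d$ simultaneously, i.e. Corollary \ref{dimL=r>d-n} applies.) Hence all the relevant $\mathrm{Ext}^1$ groups vanish for large $m$, every step of the flag splits, and $\mathbf{E}\cong\bigoplus_i\mathbf{F_i}/\mathbf{F_{i-1}}$.

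I expect the main obstacle to be the passage from the finite-dimensional vanishing $H^1(X_m,-)=0$ to the splitting of the extension of ind-bundles in a way that is compatible across the whole tower $\{X_m\}$ — one must check that the splittings on $X_m$ can be chosen compatibly with the restriction maps $\phi_m^*$, equivalently that the $\mathrm{Ext}^1$ inverse system has vanishing limit and vanishing $\lim^1$, which follows once $H^1(X_m,(\mathbf{E}/\mathbf{F_1})^*\otimes\mathbf{F_1}|_{X_m})=0$ for all large $m$ (so the system is eventually zero). A secondary point requiring care is verifying that Theorem \ref{linear_trivialiti} genuinely applies to each $X_m$ for $m$ large, i.e. that the chain-of-lines connectivity from Corollary \ref{svjaznost' dlja G(k,n)} translates into dominance with connected fibers of some $f_{x,n}$; this is where the finite-codimension hypothesis and the growth conditions $k_m,n_m-k_m\to\infty$ are essential.
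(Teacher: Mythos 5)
Your proposal is correct and takes essentially the same route as the paper: step-by-step splitting along the flag via the $H^1((E/F)^*\otimes F)=0$ criterion, with the vanishing supplied by Corollary \ref{dimL=r>d-n} (adjunction plus Kodaira) on $X_m$ once $n_m$ is large; the only difference is that you peel off $\mathbf{F_1}$ from the bottom while the paper runs an ascending induction on $i$, which changes nothing of substance. Two harmless remarks: the twists occurring in $(\mathbf{E}/\mathbf{F_1})^*\otimes\mathbf{F_1}$ are $c=a_1-b>0$ rather than $b-a_1$ (immaterial for your argument, which only needs $c>d-n_m$ for $m$ large), and your explicit handling of the triviality of the trivial-on-lines factors and of the compatibility of the splittings over the tower $\{X_m\}$ is in fact more careful than the paper, which leaves both points implicit.
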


\begin{proof}[\textbf{Proof.}]
Let us show that there is $M \in \mathbb{Z_+}$ such that for any $m>M$ the restriction of the bundle  $\mathbf{E}$ on $X_m$
splits as a sum  $\oplus_i\mathbf{F_{i}}/\mathbf{F_{i-1}}|_{X_m}$ of corresponding subbundles on $X_m$. Namely,
chose  $M$ such that $d-n_M<0$. We will establish this splitting by induction in $i$. Suppose it is proven that
$F_{i-1}=\sum_{1\le j\le {i-1}}r_j\mathcal{O}(a_j)$, $a_{j-1}>a_j$. Then $F_{i}$ is an extension of the bundle 
 $F_{i-1}$ by $r_{i}\mathcal{O}_{a_{i}}$. To prove that $F_{i}$
splits it is enough to know that
$$H^1(r_{i}\mathcal{O}(-a_{i})\otimes  \sum_{1\le j\le {i-1}}r_j\mathcal{O}(a_j) )=0.$$
The latter  holds by Corollary
\ref{dimL=r>d-n} since $a_{j}-a_{i}>0>d-n_M$
for any $j\le i$.

\end{proof}

\section{Proof of Theorem \ref{MainTheorem}}

In this section we prove Theorem \ref{MainTheorem}.

Since the bundle $\mathbf{E}$ is uniform 
we can apply Theorem \ref{podkrutka}. 
It follows that each quotient bundle
$\mathbf{F_{i}}/\mathbf{F_{i-1}}$ for $1\leq i\leq s$ 
is a twist of a bundle  trivial on lines by a line bundle.

Set in  Theorem \ref{linear_trivialiti} $n=k_m$ and apply it  to $X=X_m$ 
(a complete intersection in  $G(k_m,n_m)$). 
Then the fibre of the morphism 
$f_{x,k_m}$ over $y \in X_m$ 
is the space of paths of length $k_m$  on $X_m$ that start at ${x}$ and finish at $y$.
In Corollary \ref{svjaznost' dlja G(k,n)} set $X=X_m$ (recall that  $X_m=G(k_m,n_m)\cap \bigcap_{i=1}^l Y_{i,m}$). 
Then for a sufficiently large  $m$ such that $k_m$ and $n_m$
satisfy the inequality   $2\sum_i (d_i+1)\le \frac{k_m}{2} \le [\frac{n_m}{4}]$ where $d_i=\deg Y_{i,m}$, we deduce that the space of paths $P_{k_m}(x,y)$ connecting $x$ with $y$ on  $X_m$ is non-empty and connected. 
Hence the fibres of the morphism  $f_{x,k_m}$ are non-empty and connected, in particular $f_{x,k_m}$ is dominant.
 
So  the conditions of Theorem  \ref{linear_trivialiti} hold for $X_m$. 
It follows that any  vector bundle on $\mathbf{X}$  trivial on lines 
is trivial.
So each $\mathbf{F_{i}}/\mathbf{F_{i-1}}$ is a twist of 
a trivial bundle by a line bundle.

Finally, using Theorem  \ref{E-rassheplenie v prjamuju summu} we deduce that the bundle $\mathbf{E}$
is a direct sum $\mathbf{E}=\oplus_i\mathbf{F_{i}}/\mathbf{F_{i-1}}$.
This proves Theorem \ref{MainTheorem}.

\hfill $\square$

{\small
}
\bigskip

\null


\begin{thebibliography}{99}

\bibitem{BV}  Barth W.,  Van de Ven A. On the geometry in codimension 2 in Grassmann manifolds//  Lecture Notes in Math. 412. Springer-Verlag, 1974. P. 1-35.
\bibitem{DP}  Donin J.,  Penkov I. Finite rank vector bundles on inductive limits of Grassmannians// IMRN. 2003.
 No 34. P. 1871-1887.
\bibitem{GH} Griffiths P. A., Harris J. Principles of Algebraic Geometry// New York: Wiley, 1978.
\bibitem{OSS}  Okonek C., Schneider M., Spindler H., Vector bundles on complex projective spaces, Progress in Mathematics 3 // Boston. Basel, Stuttgart : Birkhauser, 1980.
\bibitem{La} Lazarsfeld R. , Positivity in algebraic geometry I, Classical setting: Line Bundles and Linear series // Springer-Verlag, Berlin, 2004.
\bibitem{S1} Sato E.  On the decomposability of infinitely extendable vector bundles on projective spaces and Grassmann varieties// J. Math. Kyoto Univ. 1977. No 17. P. 127-150.
\bibitem{PT4} Penkov I., Tikhomirov A.S. Linear ind-Grassmannians// Pure and Applied Mathematics Quarterly. 2014. 10. Nо 2. P.289-323.
\bibitem{PT3} Penkov I.,  Tikhomirov A.S. On the Barth--Van de Ven--Tyurin--Sato theorem// arXiv:1405.3897[math.AG].
\bibitem{PT1} Penkov I.,  Tikhomirov A. S. Rank-2 vector bundles on ind-Grassmannians// Algebra, arithmetic,and geometry: in honor of Yu. I. Manin, V II, Progr. Math., V. 270. Birkhaeuser, Boston-Basel-Berlin, 2009. P. 555-572.
\bibitem{T} Tyurin A. N. Vector bundles of finite rank over infinite varieties// Math. USSR. Izvestija. 1976. No 10.  P. 1187-1204.
\bibitem{H} Hartshorne R. Algebraic Geometry// New York: Springer-Verlag, 1977.
\bibitem{ermakova} Ермакова С.М. О пространстве путей на полных пересечениях в~грассманианах// МАИС.
2014. Т. 21. No 4. C.35-46 (English translation: Yermakova S.M. On the variety of paths on complete intersections in Grassmannians// MAIS. 2014. V. 21. No 4.  P. 35-46).
\bibitem{ermakova2} Ермакова С.М. Равномерность векторных расслоений конечного ранга на полных пересечениях конечной коразмерности в линейных инд-грассманианах// МАИС. 2015. Т. 22. No 2. C. (English translation: Yermakova S.M. Uniformity of vector bundles of finite rank on complete intersections of finite codimension  in a linear ind-Grassmannian// MAIS. 2015. V. 22. No 2.  P. ).
\bibitem{PT2}  Пенков И.Б.,  Тихомиров А.С. Тривиальность векторных расслоений на скрученных инд-грассманианах// Математический сборник. 2011. 202. No 1. C. 65-104 (English translation: Penkov I.B.,
  Tikhomirov A.S. Triviality of vector bundles on twisted ind-Grassmannians// Sbornik: Mathematics. 2011. 202, No 1.  P. 61-99).
\bibitem{Sh}  Шафаревич И. Р. Основы алгебраической геометрии// МЦНМО, Москва, 2007. (English translation: Shafarevich I.R. Foundations of Algebraic Geometry// MCCME, Moscow. 2007).










\end{thebibliography}
\end{document}